%
%

\documentclass[12pt]{amsart}
\usepackage{amscd,amsmath,amsthm,amssymb,verbatim}


\textwidth=16cm 
\textheight=21cm 
\topmargin=0.5cm
\oddsidemargin=0.5cm 
\evensidemargin=0.5cm 
\setlength{\parskip}{4pt}

\numberwithin{equation}{section}
\hyphenation{semi-stable}


\newtheorem{theorem}{Theorem}[section]
\newtheorem{lemma}[theorem]{Lemma}

\newtheorem{corollary}[theorem]{Corollary}

\theoremstyle{definition}
\newtheorem{definition}[theorem]{Definition}
\newtheorem{remark}[theorem]{Remark}
\newtheorem{example}[theorem]{Example}


\begin{document}

\title{Generalized cover ideals and the persistence property}
\thanks{Version: \today}

\author{Ashwini Bhat}
\address{Department of Mathematics, 
Oklahoma State University, 
401 Mathematical Sciences,
Stillwater, OK 74078}
\email[A. Bhat]{ashwisb@ostatemail.okstate.edu}

\author{Jennifer Biermann}
\address{Department of Mathematics and Statistics, 451A Clapp Lab, 
Mount Holyoke College, South Hadley, MA 01075, USA}
\email[J. Biermann]{jbierman@mtholyoke.edu}

\author{Adam Van Tuyl}
\address{Department of Mathematical Sciences, 
Lakehead University, 
Thunder Bay, ON P7B 5E1, Canada}
\email[A. Van Tuyl]{avantuyl@lakeheadu.ca}
\urladdr[A. Van Tuyl]{http://flash.lakeheadu.ca/~avantuyl}

\keywords{monomial ideals, associated primes, trees, persistence property, index of stability
\\
\indent
2000 {\em Mathematics Subject Classification.} 13F20, 13A15, 05C25}

\begin{abstract}
Let $I$ be a square-free monomial ideal in $R = k[x_1,\ldots,x_n]$, and 
consider the sets of associated primes ${\rm Ass}(I^s)$  for all
integers $s \geq 1$.  Although it is known that the sets of associated 
primes of powers of $I$ eventually stabilize, there are few results about 
the power at which this stabilization occurs (known as the index of stability).  
We introduce a family of square-free monomial
ideals that can be associated to a finite simple graph $G$ that
generalizes the cover ideal construction. 
When $G$ is a tree, we explicitly determine ${\rm Ass}(I^s)$ for all $s \geq 1$.
As consequences, not only can we compute the index of stability,  
we can also show that this family of ideals has the persistence
property.
\end{abstract}

\maketitle


\section{Introduction}

Let $I$ be an ideal of the polynomial ring $R = k[x_1,\ldots,x_n]$ with
$k$ a field.
A prime ideal $P \subseteq R$ is an {\it associated prime} of 
$I$ if there exists an element $T \in R$ such that $I:\langle T \rangle = P$.  
The {\it set of associated primes} of $I$,
denoted ${\rm Ass}(I)$, is the set of all prime ideals associated
to $I$.   We shall be interested in the sets ${\rm Ass}(I^s)$ as $s$ varies.
Brodmann \cite{B} proved 
that there exists an integer 
$s_0$ such that ${\rm Ass}(I^s) = {\rm Ass}(I^{s_0})$
for all integers $s \geq s_0$.  The least such integer $s_0$
is called the {\it index of stability}, and following \cite{HQ}, we denote
it by {\rm astab}$(I)$.   We are interested in the following problem which arises from Brodmann's
result:  determine {\rm astab}$(I)$ in terms
of the invariants of $R$ and $I$.  
Little is known about this problem, and in particular, there are few results 
providing exact calculations of {\rm astab}$(I)$. 

An upper bound on {\rm astab}$(I)$ for any monomial ideal $I$ was given by  Hoa \cite{H}.   
This bound is quite large and is in terms of the number of variables in the ring, 
the number of minimal generators of the ideal and the maximal degree of a minimal generator.  
Even when $I$ is a square-free monomial ideal, determining ${\rm astab}{(I)}$
remains a challenging problem.  A lower bound for ${\rm astab}{(I)}$ was given in \cite{FHVT}
in terms of the chromatic number of a hypergraph constructed from the
primary decomposition of $I$.  When $I$ is the edge ideal of a graph (a quadratic
square-free monomial ideal), Chen, Morey and Sung \cite{CMS}  provide an upper 
bound on {\rm astab}$(I)$.   However, the recent work of \cite{BHR,FHVT,FHVTconj,HQ,HRV,MMV} has 
suggested a possible answer.  In particular, 
Herzog and Qureshi \cite{HQ} posit that the bound ${\rm astab}(I) \leq 
\dim R-1 = n-1$ should hold for square-free monomial ideals $I$ (this
bound is significantly smaller than that given in \cite{H}).

Brodmann's results also suggest the following secondary question:  which ideals 
satisfy the {\it persistence property}, that is, for which ideals does the 
containment ${\rm Ass}(I^s)\subseteq {\rm Ass}(I^{s+1})$ hold for all $s \geq 1$?  
Recently, Kaiser, Stehl\'ik, and \u Skrekovski \cite{KSS} have shown that not all 
square-free monomial ideals have this property.  In light of this result, it is an interesting 
question to determine which square-free monomial ideals have the persistence property.  
Results in this direction have shown that the persistence property 
holds for many classes of square-free monomial ideals, including square-free principal Borel ideals 
\cite{A}, edge ideals \cite{MMV}, the cover
ideals of perfect graphs \cite{FHVT}, and polymatroidal ideals \cite{HRV}.

In this paper, we introduce a family 
of square-free monomial ideals (generalizing the notion of a cover ideal) that can be associated to 
a finite simple graph $G$, and study the associated
primes of their powers. More formally,
suppose that $G$ is a finite simple graph on the vertex set $V_G = \{x_1,x_2,\ldots,x_n\}$ with edge set $E_G$. 
For any $x \in V_G$, we let $N(x) = \{y ~|~ \{x,y\} \in E_G\}$
denote the set of {\it neighbours of $x$}.  By identifying the vertex $x_i$
with the variable $x_i$ in $R$, we define the 
following ideals.

\begin{definition}\label{partial}
Fix an integer $t \geq 1$.  The {\it partial $t$-cover ideal} of
$G$ is the monomial ideal
\[J_t(G) = \bigcap_{x \in V_G} \left(\bigcap_{\{x_{i_1},\ldots,x_{i_t}\} \subseteq
N(x)} \langle x,x_{i_1},\ldots,x_{i_t} \rangle \right).\]
\end{definition} 

\noindent
When $t=1$, our construction is simply the cover ideal of a finite
simple graph $G$ (see Section 2 for more details).  Recall that a graph
is a {\it tree} if it has no induced cycles.  Our main result 
is to show that for when $G$ is a tree, we can compute the index
of stability of $J_t(G)$, and show that this family
has the persistence property.

\begin{theorem}\label{maintheorem}
Let $G = (V_G,E_G)$  be a tree on $n$ vertices
and fix any integer $t \geq 1$.  Then the 
partial $t$-cover ideal $J_t(G)$ satisfies the persistence property.
Furthermore 
\[{\rm astab}(J_t(G)) = 
\left\{
\begin{array}{ll}
1 & \mbox{if $t=1$} \\
\min\{s ~|~ s(t-1) \geq \Delta(G) -1 \} & \mbox{if $t > 1$}
\end{array}
\right.
\]
where $\Delta(G)$ is the maximal degree of $G$, i.e., the largest degree of a vertex of $G$.
\end{theorem}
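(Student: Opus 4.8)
The core of the argument is to obtain an explicit combinatorial description of $\mathrm{Ass}(J_t(G)^s)$ for every $s\geq 1$ when $G$ is a tree. Let me think about the structure first. The ideal $J_t(G)$ is an intersection of monomial primes, one for each pair $(x, S)$ with $x\in V_G$ and $S\subseteq N(x)$, $|S|=t$; each such prime is $P_{x,S}=\langle x\rangle+\langle y: y\in S\rangle$. So the minimal primes of $J_t(G)$ are (at most) these $P_{x,S}$, and $\mathrm{Ass}(J_t(G))=\mathrm{Min}(J_t(G))$ consists precisely of those that are not redundant in the intersection. The first task is to pin down $\mathrm{Min}(J_t(G))$ exactly: I expect that $P_{x,S}$ is a minimal prime iff no proper subset of $\{x\}\cup S$ already contains $\{x'\}\cup S'$ for another valid pair — and on a tree this should be clean. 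Equivalently one describes the minimal vertex covers of the hypergraph whose edges are the sets $\{x\}\cup S$.

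Next, I would analyze $\mathrm{Ass}(J_t(G)^s)$ for $s\geq 2$. The standard tool is the following: $P\in\mathrm{Ass}(I^s)$ iff $\mathrm{depth}(R_P/I^sR_P)=0$, and for monomial ideals one can localize at $P=\mathfrak{m}$ after setting the variables outside $P$ equal to $1$. So it suffices to understand when $\mathfrak{m}=\langle x_1,\dots,x_n\rangle\in\mathrm{Ass}(J_t(H)^s)$ for $H$ an arbitrary induced subtree (more precisely, after the substitution the ideal becomes $J_t$ of an induced subgraph, which is again a forest). Using the Takayama-type or the Li–Swanson / Francisco–Hà–Van Tuyl criterion that $\mathfrak{m}\in\mathrm{Ass}(I^s)$ iff there is a monomial $x^{\mathbf{a}}\notin I^s$ with $x^{\mathbf{a}}x_j\in I^s$ for all $j$, I would translate membership in $J_t(H)^s$ into a covering/flow condition on $H$: a monomial $\prod x_i^{a_i}$ lies in $J_t(H)^s$ iff for every valid pair $(x,S)$ the "weight" $a_x+\sum_{y\in S}a_y$ is large enough in an appropriate LP-duality sense. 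The cleanest route is probably to compute the symbolic powers first — since $J_t(H)$ is an intersection of monomial primes, $J_t(H)^{(s)}=\bigcap P_{x,S}^s$ has an obvious description — and then control the gap between $J_t(H)^{(s)}$ and $J_t(H)^s$. On trees I expect $\mathrm{Ass}(J_t(G)^{(s)})$ to be constant in $s$ (equal to $\mathrm{Min}$), so the whole subtlety, including the $\Delta(G)$-dependence, comes from the ordinary-versus-symbolic discrepancy, which is exactly where a high-degree vertex forces $\mathfrak{m}$ into $\mathrm{Ass}$ only once $s(t-1)\geq \Delta(G)-1$.

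With the description of $\mathrm{Ass}(J_t(G)^s)$ in hand, both conclusions follow. For persistence I would show directly that if $P\in\mathrm{Ass}(J_t(G)^s)$ then $P\in\mathrm{Ass}(J_t(G)^{s+1})$: after localizing, take the witnessing monomial $x^{\mathbf a}$ for $J_t(H)^s$ and modify it (e.g. multiply by a suitable squarefree or near-squarefree monomial supported on all of $V_H$, or use the "colon-preserving" trick of \cite{MMV}) to produce a witness for $J_t(H)^{s+1}$; the forest structure should make the required monomial arithmetic transparent. For $\mathrm{astab}$, the $t=1$ case is the cover ideal of a tree, which is known (bipartite graphs are perfect, so $J_1(G)^s=J_1(G)^{(s)}$ and $\mathrm{Ass}$ is already stable at $s=1$). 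For $t>1$, I would argue that $\mathfrak{m}$-type primes $P_{x}$ localized at a high-degree vertex $x$ with $\deg x=\Delta(G)$ enter $\mathrm{Ass}$ exactly at $s=\min\{s:s(t-1)\geq\Delta(G)-1\}$ — lower bound by exhibiting a non-member monomial all of whose multiples are in the $s$th power only once $s$ is that large, upper bound by showing every associated prime has appeared by then and no new ones appear later (so stability). I expect the main obstacle to be the precise combinatorial lemma translating monomial membership in $J_t(H)^s$ into an inequality on $H$ — essentially an integer-programming/LP-duality statement whose integrality must be justified using that $H$ is a forest; once that lemma is proved, everything else is bookkeeping.
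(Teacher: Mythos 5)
Your high-level skeleton --- localize to reduce to the question of when the maximal ideal is associated, characterize that via a witness monomial $T\notin I^s$ with $x_jT\in I^s$ for all $j$, then read off persistence and ${\rm astab}$ from an explicit description of ${\rm Ass}(J_t(G)^s)$ --- is exactly the paper's strategy, and your handling of the $t=1$ case via bipartiteness matches the paper's appeal to Gitler--Reyes--Villarreal. However, there are two genuine gaps. First, the technical engine you propose (compute symbolic powers, then control the ordinary-versus-symbolic gap by an ``LP-duality'' membership criterion for $J_t(H)^s$ whose integrality is justified by the forest structure) is not a workable route as stated: membership in \emph{ordinary} powers of a monomial ideal is not governed by a linear-programming/covering condition (that is the situation for integral closures and symbolic powers), and you correctly flag this lemma as the main obstacle but offer no way to prove it. The paper never goes near LP duality; it works directly with factorizations $x_jT = m_1\cdots m_s M$ into minimal generators and derives contradictions by swapping variables between factors and counting degrees.

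Second, and more importantly, your proposal never identifies the structural fact that carries the whole theorem: for a tree $\Gamma$, a prime $P$ lies in ${\rm Ass}(J_t(\Gamma)^s)$ if and only if the induced subgraph $\Gamma_P$ is a star $K_{1,r}$ with $t\le r\le\min\{n,s(t-1)+1\}$ (Theorem \ref{maintheoremtrees}). This is what makes persistence immediate (the family of admissible stars only grows with $s$) and what ties ${\rm astab}$ to $\Delta(G)$ rather than to some other invariant. Establishing it requires (a) the base case for star graphs, where the ``if'' direction needs an explicit witness monomial built from a circularly repeated sequence of variables (Theorem \ref{maintheoremstar}), and (b) the reduction showing no non-star induced subtree can contribute an associated prime, which uses the existence in any tree of a vertex all but one of whose neighbours are leaves (Lemma \ref{specialvertex}), a classification of the minimal generators near such a vertex (Lemma \ref{structure}), and a lengthy counting argument. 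Your plan of ``exhibiting a non-member monomial all of whose multiples lie in the $s$th power'' at a maximal-degree vertex gives only the lower bound for ${\rm astab}$; without the star classification you cannot rule out other associated primes appearing at larger $s$, so neither the upper bound nor persistence is actually reached.
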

\noindent
In fact, we prove a stronger result (Theorem \ref{maintheoremtrees}) 
by determining the elements
of ${\rm Ass}(J_t(G)^s)$ for all $s \geq 1$.  Note that $\Delta(G) \leq n-1$,
so the upper bound suggested by Herzog and Qureshi also holds for this family.

Our paper is structured as follows.  In Section 2, we review the required
ingredients of associated primes and describe some of the properties
of $J_t(G)$.  In Section 3, we specialize to the case that $G = K_{1,n}$
is the star graph.  These graphs will play an important role
in our proof of Theorem \ref{maintheorem};  we also use these graphs
to answer a question raised in \cite{FHVT}.  Section 4 is devoted
to the proof of our main result.

\noindent
{\bf Acknowledgements.} 
Some of the results of Section 3 first appeared in \cite{Bhat}.
{\em Macaulay2} \cite{Mt} was used for computer experiments.
We thank T. H\`a and C. Francisco for their feedback.
The third author acknowledges the support of an NSERC Discovery Grant.


\section{Preliminaries}

We continue to use the terminology and definitions introduced
in the previous section.   Throughout this paper, $\mathcal{G}(I)$ denotes
the unique set of minimal generators of a monomial ideal $I$.
For any $W = \{x_{i_1},\ldots,x_{i_s}\} \subseteq V_G$, we let
$x_W = x_{i_1}\cdots x_{i_s} \in R$.

 We first explain the 
significance of the name partial $t$-cover ideal in Definition
\ref{partial}. 
A {\it vertex cover}
of a graph $G$ is a subset $W \subseteq V_G$ which satisfies the
following property:  for any $x \in V_G$, either $x \in W$ or 
$N(x) \subseteq W$.  In other words, all the edges containing $x$ are covered.
We generalize this definition:  a {\it partial $t$-cover}
is a subset $W \subseteq V_G$ which satisfies the
following property: for any $x \in V_G$,
either $x \in W$ or there exists some subset $S \subseteq N(x)$
with $|S| = |N(x)|-t+1$ and $S \subseteq W$.  That is,
for each $x \in V_G$, all, but  perhaps $t-1$ of the edges 
containing $x$, are covered by $W$.   When $t =1$, this is simply
the definition of a vertex cover.  
The following lemma justifies our
choice of name for $J_t(G)$. 

\begin{lemma}  \label{genspartial}
Let $G = (V_G,E_G)$ be a finite simple graph
and $t \geq 1$ an integer.  Then
\[J_t(G) = \langle x_W ~|~ \mbox{$W \subseteq V_G$ is a partial $t$-cover} 
\rangle.\]
\end{lemma}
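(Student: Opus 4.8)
The plan is to reduce membership in the intersection defining $J_t(G)$ to an elementary combinatorial condition on supports. First I would record that $J_t(G)$, being a finite intersection of monomial ideals, is again a monomial ideal, so it is enough to describe which monomials it contains; since every ideal $\langle x,x_{i_1},\dots,x_{i_t}\rangle$ appearing in Definition \ref{partial} is generated by variables, it is in fact enough to describe which squarefree monomials $x_W$ it contains. I would also adopt the convention that an empty intersection of ideals equals $R$, so that a vertex $x$ with $|N(x)|<t$ — for which there is no subset $\{x_{i_1},\dots,x_{i_t}\}\subseteq N(x)$ — imposes no condition; this matches the partial $t$-cover definition, where such a vertex $x$ is handled by taking $S=\emptyset$.

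The heart of the argument is the claim that for $W\subseteq V_G$ one has $x_W\in J_t(G)$ if and only if $W$ is a partial $t$-cover. Fix a vertex $x$. A squarefree monomial lies in $\langle x,x_{i_1},\dots,x_{i_t}\rangle$ precisely when its support meets $\{x,x_{i_1},\dots,x_{i_t}\}$, so $x_W$ lies in $\bigcap_{\{x_{i_1},\dots,x_{i_t}\}\subseteq N(x)}\langle x,x_{i_1},\dots,x_{i_t}\rangle$ if and only if either $x\in W$, or every $t$-element subset of $N(x)$ meets $W$. The second alternative fails exactly when $N(x)\setminus W$ contains a $t$-element subset, i.e.\ when $|N(x)\setminus W|\geq t$; hence the condition is $|N(x)\setminus W|\leq t-1$, equivalently $|W\cap N(x)|\geq |N(x)|-t+1$, equivalently there is a set $S\subseteq W\cap N(x)$ with $|S|=|N(x)|-t+1$. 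This is exactly the defining property of a partial $t$-cover at $x$; intersecting over all $x\in V_G$ proves the claim.

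Granting the claim, the Lemma follows quickly. The inclusion $\langle x_W \mid W \text{ a partial } t\text{-cover}\rangle\subseteq J_t(G)$ is the ``if'' direction of the claim. For the reverse inclusion, let $m$ be any monomial of $J_t(G)$ and let $W$ be the set of variables dividing $m$, so that $x_W$ divides $m$. Each prime in the intersection defining $J_t(G)$ is generated by variables, and any such variable that divides $m$ also divides $x_W$; hence $x_W\in J_t(G)$, so by the claim $W$ is a partial $t$-cover and $m\in\langle x_W\rangle$. Since $J_t(G)$ is a monomial ideal, this gives $J_t(G)\subseteq\langle x_W \mid W \text{ a partial } t\text{-cover}\rangle$, and we are done.

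I do not expect a serious obstacle here: the only points that require care are the bookkeeping for low-degree vertices (where the relevant family of $t$-subsets of $N(x)$ is empty) and the elementary set-theoretic equivalence ``every $t$-element subset of $N(x)$ meets $W$'' $\iff$ ``$|N(x)\setminus W|\leq t-1$''. Neither is deep, so the proof should be short.
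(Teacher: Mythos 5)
Your proposal is correct and follows essentially the same route as the paper's: reduce to deciding which squarefree monomials lie in each variable-generated prime, and check the partial $t$-cover condition vertex by vertex, using that $S\cap T\neq\emptyset$ whenever $|S|=|N(x)|-t+1$ and $|T|=t$. Your one-line counting equivalence (``every $t$-subset of $N(x)$ meets $W$'' iff $|N(x)\setminus W|\le t-1$) cleanly replaces the paper's iterative exchange argument for producing $t$ uncovered neighbours, and your reduction from an arbitrary monomial $m$ to its support $x_W$ is a slightly more careful version of the paper's restriction to minimal generators, but neither changes the method.
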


\begin{proof}
Let $m \in \mathcal{G}(J_t(G))$, and so 
$m = x_W$ for some $W \subseteq V_G$.  Suppose $W$ is not a partial
$t$-cover.  Then there exists a vertex $x$ such that $x \not\in W$,
and for all $S \subseteq N(x)$ with $|S| = |N(x)|-t+1$, there
is some $x_j \in S \setminus W$.  We claim that there are $t$
neighbours of $x$ not in $W$.  Let $S_1 = \{x_1,\ldots,x_{|N(x)|-t+1}\}$.
Because $W$ is not a partial $t$-cover, let $x_{i_1} \in S_1 \setminus W$.
Set $S_2 = (S_1 \setminus \{x_{i_1}\}) \cup \{x_{|N(x)|-t+2}\}$. 
Again, $W$ is not a partial $t$-cover, so there exists $x_{i_2} \in S_2 
\setminus W$.  We repeat $t$ times and find $t$ neighbours of $x$, say
$\{x_{i_1},\ldots,x_{i_t}\}$, that do not appear in $W$.  It then
follows that $m = x_W \not\in \langle x,x_{i_1},\ldots,x_{i_t} \rangle$ 
since none
of these variables appear in $x_W$.  But this contradicts the 
fact that $m \in J_t(G) \subseteq \langle 
x,x_{i_1},\ldots,x_{i_t} \rangle$.  Therefore $W$ is a partial $t$-cover.

For the converse, let $x_W$ be any square-free monomial which corresponds
to a partial $t$-cover.  Rewrite $J_t(G)$ as 
\footnotesize
\[J_t(G) = \left(\bigcap_{x \in W} \left(\bigcap_{\{x_{i_1},\ldots,x_{i_t}\} \subseteq
N(x)} \langle x,x_{i_1},\ldots,x_{i_t} \rangle \right)\right)
\cap
\left( \bigcap_{x \in V_G \setminus W} \left(\bigcap_{\{x_{i_1},\ldots,x_{i_t}\} \subseteq
N(x)} \langle x,x_{i_1},\ldots,x_{i_t} \rangle \right)\right)
.\]
\normalsize
If $x \in W$, then $x_W
\in \langle x,x_{i_1},\ldots,x_{i_t}\rangle$, so 
$x_W$ is in the first intersection.  If
$x \not\in W$, then there exists a subset $S \subseteq N(x)$
with $|N(x)|-t+1$ elements such that $S \subseteq W$.  But then
for any subset $T \subseteq N(x)$ with $|T| = t$, 
 $S \cap T \neq \emptyset$.  This implies that
$x_W \in \langle x,x_{i_1},\ldots,x_{i_t} \rangle$ for each
subset $\{x_{i_1},\ldots,x_{i_t}\}$ of $N(x)$ of size $t$.  
So $x_W$ is in the second intersection, thus completing
the proof.
\end{proof}

\begin{remark}
The Alexander dual (see \cite{MS} for the definition) 
of $J_t(G)$ is also of interest:
\[
I_t(G):= J_t(G)^\vee = \sum_{x \in V_G} \langle xx_{i_1}\cdots x_{i_t}
 ~|~ \{x_{i_1},\dots, x_{i_t}\}\subseteq N(x)\rangle.
\] 
If $t = 1$, then $I_1(G)$ is the edge ideal of $G$, and 
if $t=2$, then $I_2(G)$ 
 is the 2-path ideal of $G$ (see \cite{CD} for the definition).
The ideals $I_t(G)$ can be viewed as 
generalized edge ideals.  In a future paper, we will investigate
some of the properties of $I_t(G)$.
\end{remark}

We turn to the relevant results 
on associated primes of square-free monomial ideals.
Via the technique of localization, and using the fact
that localization and taking powers commute, we 
simply need to determine when the maximal ideal is an associated prime
of a monomial ideal.  The following lemma justifies this reduction.
The proof is similar to the proof of \cite[Lemma 2.11]{FHVT},
so is omitted.  Given a graph $G = (V_G,E_G)$ and subset $P \subseteq V_G$,
we write $G_P$ for the {\it induced graph} on $P$, i.e., the
graph with vertex set $P$, and edge set 
$E_{G_{P}} = \{e \in E_G ~|~ e \subseteq P\}$.

\begin{lemma}\label{localization}
Let $G$ be a graph on the 
vertex set $\{x_1, \dots, x_n\}$, and let $J_t(G)$ be the partial $t$-cover ideal of $G$.  The following are equivalent: 
\begin{enumerate}
\item[$(i)$]
$P= \langle x_{i_1}, \dots, x_{i_r}\rangle \in {\rm Ass}(J_t(G)^s)$
in $R = k[x_1,\ldots,x_n]$
\item[$(ii)$] 
$P=\langle x_{i_1}, \dots, x_{i_r}\rangle \in {\rm Ass} (J_t(G_P)^s)$ in
$R_P = k[x_{i_1},\ldots,x_{i_r}]$.
\end{enumerate}
\end{lemma}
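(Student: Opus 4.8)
The plan is to prove this by the standard localization technique for associated primes of monomial ideals, adapting the argument of \cite[Lemma 2.11]{FHVT} that the excerpt cites. The key algebraic inputs are: (a) associated primes commute with localization, i.e., $P \in {\rm Ass}_R(M)$ if and only if $PR_P \in {\rm Ass}_{R_P}(M_P)$; (b) localization commutes with taking powers, so $(J_t(G)^s)_P = (J_t(G)_P)^s$; and (c) for a monomial ideal, localizing at a prime monomial ideal $P = \langle x_{i_1},\dots,x_{i_r}\rangle$ amounts to setting the variables outside $P$ equal to $1$. So the real content is a combinatorial identity: after setting $x_j = 1$ for every $j$ with $x_j \notin P$, the ideal $J_t(G)$ becomes (the extension of) $J_t(G_P)$, the partial $t$-cover ideal of the induced subgraph on the vertex set $P$.

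First I would reduce to showing the ring-level statement: in the localized ring $R_P = k[x_{i_1},\dots,x_{i_r}]$, one has $J_t(G)_P = J_t(G_P)$. Combined with (a) and (b), this gives: $P \in {\rm Ass}_R(J_t(G)^s) \iff PR_P \in {\rm Ass}_{R_P}((J_t(G)^s)_P) = {\rm Ass}_{R_P}((J_t(G)_P)^s) = {\rm Ass}_{R_P}(J_t(G_P)^s)$, which is exactly the equivalence (i) $\iff$ (ii). (One should note that $PR_P$ is the maximal homogeneous ideal of $R_P$, but this is harmless since we only need the if-and-only-if.)

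For the combinatorial identity $J_t(G)_P = J_t(G_P)$, I would work with the generator description from Lemma \ref{genspartial}: $J_t(G)$ is generated by the monomials $x_W$ as $W$ ranges over partial $t$-covers of $G$. Localizing at $P$ sends $x_W$ to $x_{W \cap P}$. So I must show that the set $\{W \cap P : W \text{ is a partial } t\text{-cover of } G\}$ generates the same ideal in $R_P$ as $\{W' : W' \text{ is a partial } t\text{-cover of } G_P\}$, i.e., that the minimal elements of the two collections coincide. One direction: if $W$ is a partial $t$-cover of $G$, then for each $x \in P$, either $x \in W$ (hence $x \in W \cap P$) or some $S \subseteq N_G(x)$ with $|S| = |N_G(x)| - t + 1$ lies in $W$; intersecting with $P$ and using $N_{G_P}(x) = N_G(x) \cap P$ one checks $W \cap P$ is a partial $t$-cover of $G_P$ (a short neighbor-counting argument, exactly as in the second half of the proof of Lemma \ref{genspartial}). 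Conversely, given a partial $t$-cover $W'$ of $G_P$, I would extend it to a partial $t$-cover $W$ of $G$ with $W \cap P = W'$ by adding vertices outside $P$ — the natural choice is $W = W' \cup (V_G \setminus P)$, and one verifies this is a partial $t$-cover of $G$ since every vertex $x \notin P$ lies in $W$, and every $x \in P$ is covered as needed by $W' \subseteq W$. This shows the two generating sets produce the same ideal.

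The main obstacle is the converse (extension) direction of the combinatorial identity: one must make sure the extension $W = W' \cup (V_G \setminus P)$ really is a partial $t$-cover of the \emph{whole} graph $G$, not just that it restricts correctly — in particular verifying the covering condition at each vertex $x \in P$, where the neighbors of $x$ in $G$ may include vertices outside $P$ which are now automatically in $W$, so the condition is actually easier to satisfy, but this needs to be spelled out carefully with the $|S| = |N(x)| - t + 1$ bookkeeping. A subtlety to watch: when restricting, $|N_G(x)|$ can strictly exceed $|N_{G_P}(x)|$, so one needs the observation that if $S \subseteq N_G(x)$ has $|S| = |N_G(x)| - t + 1$ then $S \cap P \subseteq N_{G_P}(x)$ has at least $|N_{G_P}(x)| - t + 1$ elements (because at most $|N_G(x)| - |N_{G_P}(x)|$ elements of $S$ lie outside $P$). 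Once this inequality is in hand, both directions follow by the same type of argument already used in Lemma \ref{genspartial}, and the proof is complete; since it is so close to \cite[Lemma 2.11]{FHVT}, it is reasonable to present it in condensed form or omit the routine parts as the authors do.
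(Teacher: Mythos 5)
Your argument is correct and is essentially the intended one: the paper omits the proof, pointing to \cite[Lemma 2.11]{FHVT}, and the argument there is exactly the localization technique you describe, with the only real content being the identity $J_t(G)_P = J_t(G_P)$ together with the facts that localization commutes with powers and with ${\rm Ass}$. Note that this identity also follows in one line from Definition \ref{partial} rather than from the generators: localizing the intersection $\bigcap \langle x, x_{i_1},\ldots,x_{i_t}\rangle$ at $P$ discards precisely the components not contained in $P$ and keeps exactly those indexed by $x \in P$ and $t$-subsets of $N_G(x)\cap P = N_{G_P}(x)$, which sidesteps the partial-cover extension/restriction bookkeeping in your version (that bookkeeping is nevertheless correct, including the inequality $|S\cap P| \geq |N_{G_P}(x)|-t+1$).
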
 

The next lemma shows $P \in {\rm Ass}(J_t(G)^s)$ gives a necessary condition
on the graph $G_P$.

\begin{lemma}\label{connected}
Let $G$ be a graph on the vertex set $\{x_1, \dots, x_n\}$, and 
let $J_t(G)$ be the partial $t$-cover ideal of $G$.
If $P =\langle x_{i_1}, \dots, x_{i_r}\rangle \in {\rm Ass}(J_t(G)^s)$,
then $G_P$ is connected.
\end{lemma}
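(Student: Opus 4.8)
The plan is to reduce, by localization, to the case where $P$ is the homogeneous maximal ideal, and then argue by contraposition: a disconnected graph cannot produce the full maximal ideal as an associated prime of any power of its partial $t$-cover ideal.

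First, by Lemma~\ref{localization}, $P = \langle x_{i_1},\ldots,x_{i_r}\rangle \in {\rm Ass}(J_t(G)^s)$ in $R$ if and only if $P \in {\rm Ass}(J_t(G_P)^s)$ in $R_P = k[x_{i_1},\ldots,x_{i_r}]$. So I may replace $G$ by $G_P$ and $R$ by $R_P$; after renaming it suffices to show that if $\mathfrak{m} = \langle x_1,\ldots,x_n\rangle \in {\rm Ass}(J_t(G)^s)$, where $G$ is a graph on $\{x_1,\ldots,x_n\}$, then $G$ is connected. I prove the contrapositive.

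So suppose $G$ is disconnected and partition $V_G = V_1 \sqcup V_2$ with $V_1, V_2$ nonempty and no edge of $G$ joining $V_1$ to $V_2$; let $G_i = G_{V_i}$. Because $N_G(x) = N_{G_i}(x) \subseteq V_i$ for every $x \in V_i$, the intersection defining $J_t(G)$ in Definition~\ref{partial} breaks into the part coming from vertices of $V_1$ and the part coming from vertices of $V_2$, giving $J_t(G) = J_t(G_1) \cap J_t(G_2)$, where $J_t(G_i)$ is a monomial ideal involving only the variables indexed by $V_i$ (with the convention that $J_t(G_i) = R$, the empty intersection, when $G_i$ has no edges). Since $V_1$ and $V_2$ are disjoint, intersection equals product, $J_t(G_1) \cap J_t(G_2) = J_t(G_1) J_t(G_2)$, and as products of ideals commute with powers, $J_t(G)^s = J_t(G_1)^s J_t(G_2)^s = J_t(G_1)^s \cap J_t(G_2)^s$. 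Consequently the diagonal map $R/J_t(G)^s \to R/J_t(G_1)^s \oplus R/J_t(G_2)^s$ is injective, its kernel being $(J_t(G_1)^s \cap J_t(G_2)^s)/J_t(G)^s = 0$, and therefore ${\rm Ass}(J_t(G)^s) \subseteq {\rm Ass}(J_t(G_1)^s) \cup {\rm Ass}(J_t(G_2)^s)$.

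It remains to observe that $\mathfrak{m}$ lies in neither set on the right. Fix $y \in V_2$. No minimal generator of the monomial ideal $J_t(G_1)^s$ involves $y$, so $y$ is a nonzerodivisor on $R/J_t(G_1)^s$ and hence belongs to no associated prime of $J_t(G_1)^s$; since $y \in \mathfrak{m}$, this gives $\mathfrak{m} \notin {\rm Ass}(J_t(G_1)^s)$, and symmetrically (using any $x \in V_1$) $\mathfrak{m} \notin {\rm Ass}(J_t(G_2)^s)$. Thus $\mathfrak{m} \notin {\rm Ass}(J_t(G)^s)$, contradicting the hypothesis. The only point that calls for a little attention is the splitting $J_t(G) = J_t(G_1) \cap J_t(G_2)$, together with the degenerate case where $V_1$ or $V_2$ consists only of isolated vertices so that the corresponding factor is the empty intersection $R$; the displayed identities and the inclusion of associated-prime sets remain valid verbatim in that case, so no separate argument is needed, and everything else is routine.
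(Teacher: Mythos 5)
Your proof is correct, but it reaches the conclusion by a genuinely different route than the paper. Both arguments begin with the same reduction via Lemma~\ref{localization} to showing that a disconnected graph cannot have the full maximal ideal as an associated prime, and both rest on the same structural fact that the ideal splits across connected components: the paper records this as the factorization $m = m_y m_z$ of each minimal generator, which is exactly your identity $J_t(G) = J_t(G_1)\cdot J_t(G_2)$ read off the generators. Where you diverge is in how the contradiction is extracted. The paper works directly with a witness monomial $T$ satisfying $J_t(G)^s : \langle T\rangle = \langle x_1,\ldots,x_n\rangle$, uses $Ty_1 \in J_t(G)^s$ and $Tz_1 \in J_t(G)^s$ to peel off the $V_2$-part and the $V_1$-part of $T$ separately, and recombines them to force $T \in J_t(G)^s$; this is, in effect, a hands-on verification of the containment $J_t(G_1)^s \cap J_t(G_2)^s \subseteq J_t(G_1)^s J_t(G_2)^s$ at the single monomial that matters. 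You instead prove that containment (indeed equality) once and for all via the lcm description of intersections of monomial ideals in disjoint variables, and then invoke the standard facts that ${\rm Ass}$ of a submodule is contained in ${\rm Ass}$ of the ambient module and that a variable absent from a monomial ideal is a nonzerodivisor on the quotient, hence lies in no associated prime. Your version is more conceptual and eliminates the bookkeeping with the witness $T$; it also makes transparent exactly which general principles are in play (and handles the degenerate case $J_t(G_i)=R$ cleanly). The paper's version is self-contained at the level of monomial manipulations and does not need the module-theoretic facts about ${\rm Ass}$ of direct sums, which keeps the prerequisites minimal. Both are complete proofs.
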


\begin{proof}
By Lemma \ref{localization}, it is enough to show that if 
$\langle x_1,\ldots,x_n \rangle \in {\rm Ass}(J_t(G)^s)$ for some
$s$, then $G$ is connected.  Suppose $G$ is not connected, i.e.,
$G = G_1 \cup G_2$ with $G_1 \cap G_2 = \emptyset$.  After relabeling
the vertices, we can assume the vertices of $G_1$ are $\{y_1,\ldots,y_a\}$
and the vertices of $G_2$ are $\{z_1,\ldots,z_b\}$.  If $m 
\in \mathcal{G}(J_t(G))$, then $m = m_ym_z$ where $m_y$ is a square-free
monomial in the $y$ variables, and $m_z$ is a square-free monomial in the
$z$ variables, and furthermore, we must have $m_y \in \mathcal{G}(J_t(G_1))$,
and $m_z \in \mathcal{G}(J_t(G_2))$.  

Because $\langle x_1,\ldots,x_n \rangle = \langle y_1,\ldots,y_a,z_1,
\ldots,z_b \rangle$, and $\langle x_1,\ldots,x_n \rangle \in {\rm
Ass}(J_t(G)^s)$, there exists a monomial $T \not\in J_t(G)^s$ such that 
\begin{eqnarray*}
Ty_1 &= &m_1\cdots m_sM ~~\mbox{with $m_i \in \mathcal{G}(J_t(G))$} \\
& = & m_{y,1}m_{z,1} \cdots m_{y,s}m_{z,s}M_yM_z ~~ \mbox{with $m_i = m_{y,i}m_{z,i}$}
\end{eqnarray*}
where $m_{y,i} \in \mathcal{G}(J_t(G_1))$ and $m_{z,i} \in
\mathcal{G}(J_t(G_2))$, and $M_y$ (respectively $M_z$) is a monomial
in the $y$ variables (respectively the $z$ variables).  So, 
$T = (m_{z,1}\cdots m_{z,s}M_z)T'$
where $T'$ is a monomial in the $y$ variables.  But we also know that $Tz_1
\in J_t(G)^s$, so a similar argument allows us to write
$T = (u_{y,1}\cdots u_{y,s}U_y)T''$ where $T''$ is a monomial in
the $z$ variables, $U_y$ is a monomial in the $y$ variables,
and each $u_{y,j} \in \mathcal{G}(J_t(G_1))$.  
But this means
\begin{eqnarray*}
T  &= &(m_{z,1}\cdots m_{z,s}M_z)(u_{y,1}\cdots u_{y,s}U_y) 
=  (u_{y,1}m_{z,1})\cdots (u_{y,s}m_{z,s})U_yM_z.
\end{eqnarray*}
Now each $u_{y,i}m_{z,i} \in \mathcal{G}(J_t(G))$, so $T \in J_t(G)^s$,
a contradiction.   Thus $G$ is connected.
\end{proof}

Section 3 focuses on {\it star graphs}  $G = K_{1,n}$.  These are the graphs
with vertex set
$V_G = \{z,x_1,\ldots,x_n\}$ and edge set $E_G = \{\{z,x_i\} ~|~
1 \leq i \leq n\}$.  
The generators of $J_t(K_{1,n})$, 
as described by the next lemma,
follow directly from the definitions:

\begin{lemma} \label{generators}
Let $G = K_{1,n}$ with $V = \{z,x_1,\ldots,x_n\}$, 
and let $n \geq t \geq 1$.  Then
\[J_t(G) = \langle z \rangle + \langle x_{j_1}\cdots x_{j_{n-t+1}} ~|~
\{j_1,\ldots,j_{n-t+1}\} \subseteq \{1,\ldots,n\} \rangle.\]
\end{lemma}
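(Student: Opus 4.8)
The plan is to unwind Definition \ref{partial} for $G = K_{1,n}$ and then reduce the resulting intersection of monomial primes to the asserted generating set by an elementary support argument.

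First I would record the neighbourhoods in $K_{1,n}$: $N(z) = \{x_1,\ldots,x_n\}$ and $N(x_i) = \{z\}$ for $1 \le i \le n$. Substituting these into Definition \ref{partial}, the factor indexed by $x = z$ is $\bigcap_{\{x_{i_1},\ldots,x_{i_t}\}\subseteq\{x_1,\ldots,x_n\}}\langle z,x_{i_1},\ldots,x_{i_t}\rangle$, while the factor indexed by a leaf $x_i$ is an intersection over the $t$-element subsets of $N(x_i)=\{z\}$: when $t=1$ this is the single ideal $\langle x_i,z\rangle$, which already occurs among the $z$-factors, and when $t\ge2$ it is an empty intersection, hence all of $R$. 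In either case the leaf factors contribute nothing new, so
\[
J_t(K_{1,n}) \;=\; \bigcap_{\{i_1,\ldots,i_t\}\subseteq\{1,\ldots,n\}} \langle z,x_{i_1},\ldots,x_{i_t}\rangle .
\]

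Next I would determine which monomials lie in this intersection. Since $J_t(K_{1,n})$ is a monomial ideal, it suffices to decide when a monomial $m \in k[z,x_1,\ldots,x_n]$ lies in every $\langle z,x_{i_1},\ldots,x_{i_t}\rangle$. If $z \mid m$, then $m \in \langle z\rangle$, which is contained in the intersection. If $z \nmid m$, then $\operatorname{supp}(m) \subseteq \{x_1,\ldots,x_n\}$, and membership of $m$ in $\langle z,x_{i_1},\ldots,x_{i_t}\rangle$ forces $\{x_{i_1},\ldots,x_{i_t}\} \cap \operatorname{supp}(m) \ne \emptyset$. Requiring this for every $t$-subset of $\{x_1,\ldots,x_n\}$ is equivalent to saying that $\{x_1,\ldots,x_n\}\setminus\operatorname{supp}(m)$ contains no $t$-subset, i.e. $|\operatorname{supp}(m)| \ge n-t+1$, i.e. $m$ is divisible by a product $x_{j_1}\cdots x_{j_{n-t+1}}$ of $n-t+1$ distinct variables. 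Conversely, $z$ and each such product manifestly lie in the intersection, so $J_t(K_{1,n})$ is generated by $z$ together with all products $x_{j_1}\cdots x_{j_{n-t+1}}$, as claimed.

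The only point that needs any attention is the bookkeeping of the degenerate leaf factors — recognizing them as empty intersections (all of $R$) when $t \ge 2$ and as repetitions of existing factors when $t = 1$; beyond that, the argument rests only on the trivial combinatorial fact that a subset of $\{1,\ldots,n\}$ meets every $t$-element subset precisely when its complement has at most $t-1$ elements. Alternatively, the same conclusion can be obtained from Lemma \ref{genspartial} by checking directly that the partial $t$-covers of $K_{1,n}$ are exactly the subsets of $V_G$ that either contain $z$ or contain at least $n-t+1$ of the vertices $x_1,\ldots,x_n$.
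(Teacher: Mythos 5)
Your proof is correct and complete; the paper itself gives no proof of this lemma, asserting only that the generators ``follow directly from the definitions,'' and your unwinding of Definition \ref{partial} (the leaf factors being redundant for $t=1$ and empty intersections for $t\ge 2$, followed by the observation that a monomial not divisible by $z$ meets every $t$-subset precisely when its support misses at most $t-1$ of the $x_i$) is exactly the direct verification the authors have in mind. The alternative route via Lemma \ref{genspartial} that you sketch at the end is equally valid and equally short.
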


The next example explains what we know about ${\rm Ass}(J_t(K_{1,n})^s)$ when
$t=1$;  the situation for $t \geq 2$ is explored in the next section.

\begin{example}\label{caset=1}
Let $G = K_{1,n}$ and $t=1$.  By Lemma \ref{generators},
 $J_1(G) = \langle z,x_1x_2\cdots x_n \rangle$.  But this
is a complete intersection, so
for all $s \geq 1$,
\[{\rm Ass}(J_1(G)^s) = {\rm Ass}(J_1(G))
= \{ \langle z,x_i \rangle ~|~ 1 \leq i \leq n\}. \] 
There are at least 
two ways to prove this result.  For
any complete intersection $J$,  $J^s = J^{(s)}$, the $s$-th
symbolic power of $J$ (see \cite{ZS}) and thus
${\rm Ass}(J^s) = {\rm Ass}(J)$ for all $s \geq 1$.  Alternatively, Gitler, Reyes,
and Villarreal have shown \cite[Corollary 2.6]{GRV}
that $J_1(G)$ is normal, i.e., 
$J_1(G)^s = \overline{J_1(G)^s}$, whenever $G$ is a bipartite graph, whence the
conclusion again follows.  
Because ${\rm astab}(J_1(G)) = 1$, $J_1(G)$ has the persistence property.
\end{example}


\section{Star graphs}

Fix integers $n \geq t \geq 1$.  In this section we will
completely describe the sets ${\rm Ass}(J_t(G)^s)$ when $G = K_{1,n}$.  
We use our results to
give a new answer to a question raised by
Francisco, H\`a, and the third author in \cite{FHVT}.
Our main result is a corollary of the following theorem:

\begin{theorem}\label{maintheoremstar}
Fix integers $n \geq t \geq 1$ and let $G = K_{1,n}$
be the star graph on $V_G = \{z,x_1,\ldots,x_n\}$.  Set $J_t = 
J_t(G)$.  The following are equivalent:
\begin{enumerate} 
\item[$(i)$] $\langle z,x_1,\ldots,x_n \rangle \in{\rm Ass}(J_t^s)$ 
\item[$(ii)$] $s(t-1) \geq n-1$.
\end{enumerate}
\end{theorem}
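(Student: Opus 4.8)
The plan is to prove the two implications $(ii) \Rightarrow (i)$ and $(i) \Rightarrow (ii)$ separately, working entirely with the explicit generators of $J_t = J_t(K_{1,n})$ given in Lemma \ref{generators}, namely $z$ together with all squarefree monomials of degree $n-t+1$ in $x_1,\dots,x_n$. To show that $\mathfrak{m} = \langle z,x_1,\dots,x_n\rangle$ is associated to $J_t^s$, I must produce a monomial $T \notin J_t^s$ with $\mathfrak{m} = J_t^s : T$; concretely, it suffices (by the standard reduction for monomial ideals) to exhibit a monomial $T \notin J_t^s$ such that $Tz \in J_t^s$ and $Tx_i \in J_t^s$ for every $i$.

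For $(ii)\Rightarrow(i)$, assume $s(t-1)\ge n-1$. The natural candidate is $T = z^{s-1}(x_1x_2\cdots x_n)^{a}$ for a suitable exponent $a$, or more precisely a near-balanced power of the product so that each $x_i$-degree is as small as possible while still forcing $Tz$ and each $Tx_i$ into $J_t^s$. The key numerical point is this: a monomial $M$ of the form $z^b \prod x_i^{c_i}$ lies in $J_t^s$ iff one can write $s$ as $b' + r$ where $b'$ generators are the variable $z$ and the remaining $r$ generators are degree-$(n-t+1)$ squarefree monomials whose product divides $\prod x_i^{c_i}$; the latter is possible iff $\sum c_i \ge r(n-t+1)$ and, more subtly, no single $x_i$ has $c_i < $ (the number of generators of the product type that must use it) — but since each degree-$(n-t+1)$ generator omits exactly $t-1$ of the variables, the obstruction is exactly that some $c_i$ is too small relative to $r$. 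I would pin down the precise divisibility criterion as a short lemma, then choose $T$ so that $T$ itself just barely fails the criterion for $J_t^s$ while $Tz$ (which needs only $s-1$ product-type generators) and each $Tx_i$ (which raises one $c_i$ by one) satisfy it; the inequality $s(t-1)\ge n-1$ is precisely what makes this choice of exponents feasible.

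For $(i)\Rightarrow(ii)$, equivalently the contrapositive, assume $s(t-1) < n-1$, i.e. $s(t-1) \le n-2$, and show $\mathfrak{m}\notin{\rm Ass}(J_t^s)$. Here I would argue that whenever $T$ is a monomial with $Tz \in J_t^s$ and $Tx_i \in J_t^s$ for all $i$, already $T \in J_t^s$, so no colon can equal $\mathfrak{m}$. The idea: from $Tz \in J_t^s$ get a factorization of $Tz$ into $s$ generators; if $z$ divides $T$ enough we immediately reduce, so suppose the $z$-degree of $T$ is small, and then most of the $s$ factors in any expression of $Tx_i \in J_t^s$ are product-type generators, each omitting only $t-1$ variables. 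A counting argument on which variables get omitted — using that $s$ product-type generators can collectively "under-cover" at most $s(t-1) \le n-2$ variable-slots — should show the $x_i$-degrees of $T$ are already large enough that $T \in J_t^s$ directly. I expect this direction to be the main obstacle: one must carefully track, across the $n$ different relations $Tx_i \in J_t^s$, how the forced divisibilities on the $c_i$ combine, and the clean bound $s(t-1)\le n-2$ must be shown to be exactly the threshold below which the individual relations already imply membership of $T$. A convenient technical device is to reduce to the case where the $z$-exponent of $T$ is $0$ (factoring out powers of $z$ reduces $s$) and to work with the multiset of "omitted variables" of the product-type generators appearing in an optimal expression of some $Tx_i$.
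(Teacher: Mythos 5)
Your overall strategy---reducing to the statement that $T\notin J_t^s$ while $Tz$ and every $Tx_i$ lie in $J_t^s$, and then working directly with the generators from Lemma \ref{generators}---is the same as the paper's, but as written both implications rest on unproved steps. For $(ii)\Rightarrow(i)$, the candidate $T=z^{s-1}(x_1\cdots x_n)^a$ cannot work: if $a\ge 1$ then $T$ is divisible by $z^{s-1}\cdot(x_1\cdots x_{n-t+1})$ and hence already lies in $J_t^s$, while if $a=0$ and $t<n$ then $Tx_i=z^{s-1}x_i\notin J_t^s$ for degree reasons. The witness (for the minimal $s$) must be a monomial in the $x$-variables alone of total degree $s(n-t+1)-1$, with exponents distributed so that after multiplying by any single $x_i$ the result factors into $s$ \emph{squarefree} blocks of degree $n-t+1$; that squarefreeness is exactly the delicate point, and your plan defers it to a ``divisibility criterion'' that you never state precisely. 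The correct criterion---$\prod x_i^{c_i}$ is divisible by a product of $r$ squarefree degree-$(n-t+1)$ monomials in the $x_i$ if and only if $\sum_i\min(c_i,r)\ge r(n-t+1)$---is a genuine Gale--Ryser-type lemma requiring its own proof. The paper sidesteps it entirely: it takes $T$ to be the product of the first $s_0(n-t+1)-1$ terms of the cyclic word $x_1,\dots,x_n,x_1,\dots$ and verifies that the final index lands in $[n-t+1,n]$, which makes the regrouping of $Tx_i$ into squarefree blocks explicit.

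For $(i)\Rightarrow(ii)$ you identify the right flavor of counting (the $s$ product-type generators can collectively omit at most $s(t-1)$ variable-slots), but you acknowledge you have not carried it out, and this is where the real work lies: one must combine the $n$ separate conditions $Tx_i\in J_t^s$ with $T\notin J_t^s$, and the admissible number of $z$-factors can vary from one factorization to another, so the bookkeeping is not automatic. The paper's route is to first pin down the annihilator (Lemma \ref{annlemma}: $T$ divides $z^e(x_1\cdots x_n)^{s-e-1}$), then multiply the factorization of $Tx_1$ by one additional generator to exhibit an explicit monomial in $J_t^{s+1}$, and extract $s(t-1)\ge n-1$ from a single degree comparison. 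Your plan is completable if you first prove the membership criterion above and then run the omission count carefully, but in its present form both halves contain genuine gaps.
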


We postpone the proof, but record its consequences:

\begin{corollary}\label{corstar}
Fix integers $n \geq t \geq 1$ and let $G = K_{1,n}$
be the star graph on $V_G = \{z,x_1,\ldots,x_n\}$.  For any
$s \geq 1$,
\[{\rm Ass}(J_t(G)^s) = \left. \left\{\langle z,x_{i_1},\ldots,x_{i_r}
\rangle  ~\right|~  t \leq r \leq \min\{n,s(t-1)+1\}\right \}.\]
Moreover,
\[{\rm astab}(J_t(G)) = 
\left\{
\begin{array}{ll}
1 & \mbox{if $t=1$} \\
\min\{s ~|~ s(t-1) \geq n -1 \} & \mbox{if $t > 1$.}
\end{array}
\right.\]
\end{corollary}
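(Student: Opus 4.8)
The plan is to derive Corollary \ref{corstar} from Theorem \ref{maintheoremstar} together with the localization machinery already established, so the real work will be bookkeeping rather than new ideas. First I would fix $s \geq 1$ and determine which primes $P = \langle x_{i_1},\ldots,x_{i_r}\rangle$ (on a subset of the variables) can lie in ${\rm Ass}(J_t(G)^s)$. By Lemma \ref{connected}, $G_P$ must be connected, and since $G = K_{1,n}$ is a star, the only connected induced subgraphs are the isolated vertex $\{x_i\}$ and the stars $K_{1,r}$ obtained by taking the center $z$ together with $r$ of the leaves. A single vertex is ruled out because $J_t(G)$ localized at a single variable is either the unit ideal or zero (and in any case the principal ideal in one variable has no embedded primes), so every associated prime must contain $z$ and have the form $P = \langle z, x_{i_1},\ldots,x_{i_r}\rangle$ with $G_P = K_{1,r}$ a star on $r$ leaves. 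Here we must be slightly careful about the range of $r$: since $J_t$ is defined only for $t \leq (\text{number of leaves})$, and the generators $x_{j_1}\cdots x_{j_{n-t+1}}$ require $n - t + 1 \geq 1$, the localized ideal $J_t(G_P)$ is the partial $t$-cover ideal of $K_{1,r}$ precisely when $r \geq t$; when $r < t$ one checks directly from Lemma \ref{generators} that $J_t(G_P) = \langle z \rangle$, whose only associated prime is $\langle z\rangle$, not $P$. So the candidates are exactly the $P = \langle z, x_{i_1},\ldots,x_{i_r}\rangle$ with $t \leq r \leq n$.

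Next, for each such candidate with $t \leq r \leq n$, Lemma \ref{localization} says $P \in {\rm Ass}(J_t(G)^s)$ if and only if the homogeneous maximal ideal $\langle z, x_{i_1},\ldots,x_{i_r}\rangle$ of $k[z,x_{i_1},\ldots,x_{i_r}]$ lies in ${\rm Ass}(J_t(K_{1,r})^s)$. Now I apply Theorem \ref{maintheoremstar} with $n$ replaced by $r$: this membership holds if and only if $s(t-1) \geq r - 1$, i.e. $r \leq s(t-1) + 1$. Combining with the constraint $r \leq n$ and $r \geq t$, we conclude that $P \in {\rm Ass}(J_t(G)^s)$ exactly when $t \leq r \leq \min\{n, s(t-1)+1\}$, which is the claimed description of ${\rm Ass}(J_t(G)^s)$. (One should double-check the edge case $t = 1$: then $s(t-1)+1 = 1 < t$ is false since $t = 1$, so the range is $1 \leq r \leq 1$, giving ${\rm Ass}(J_1(G)^s) = \{\langle z, x_i\rangle\}$, consistent with Example \ref{caset=1}.)

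Finally, the formula for ${\rm astab}(J_t(G))$ falls out of the description of ${\rm Ass}(J_t(G)^s)$. When $t = 1$ the set ${\rm Ass}(J_1(G)^s)$ is independent of $s$, so ${\rm astab}(J_1(G)) = 1$. When $t > 1$, the set ${\rm Ass}(J_t(G)^s)$ consists of all primes $\langle z, x_{i_1},\ldots,x_{i_r}\rangle$ with $t \leq r \leq \min\{n, s(t-1)+1\}$; the quantity $\min\{n, s(t-1)+1\}$ is non-decreasing in $s$ and is eventually constant equal to $n$. Thus the sets stabilize precisely when $s(t-1) + 1 \geq n$, i.e. $s(t-1) \geq n-1$, and the least such $s$ is $\min\{s \mid s(t-1) \geq n-1\}$, giving the stated value of ${\rm astab}(J_t(G))$. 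I do not anticipate a genuine obstacle here; the one point requiring care is the small-$r$ behavior of the localized ideal (the regime $r < t$, where $J_t(K_{1,r})$ degenerates to $\langle z\rangle$), which must be handled so that the lower bound $r \geq t$ in the index set is correctly justified rather than simply asserted.
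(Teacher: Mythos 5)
Your proposal is correct and follows essentially the same route as the paper: Lemma \ref{connected} forces $G_P$ to be a star containing $z$, and Lemma \ref{localization} together with Theorem \ref{maintheoremstar} (applied with $n$ replaced by $r$) gives the bound $r \leq s(t-1)+1$ in both directions, with ${\rm astab}$ then read off from the monotone description of the sets. The one quibble is the regime $r<t$: the paper disposes of it by noting that any $P\in{\rm Ass}(J_t(G)^s)$ must contain a minimal prime of $J_t(G)$ (each of which has $t+1$ generators), whereas your claim that $J_t(K_{1,r})=\langle z\rangle$ for $r<t$ is not quite right --- the defining intersection is then empty, so the ideal is the unit ideal --- but either way $P$ fails to be associated and the conclusion stands.
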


\begin{proof}
The result on ${\rm astab}(J_t(G))$ follows from the first statement.
Let $\mathcal{P}$ denote the set on the right hand side of the first statement.
Let $P \in {\rm Ass}(J_t(G)^s)$. 
Because $G_P$ is connected
by Lemma \ref{connected}, $P = \langle z,x_{i_1},\ldots,x_{i_r}
\rangle$, i.e., $P$ cannot be generated by a subset of $x$ variables.
Note that this means that $G_P = K_{1,r}$ for some $r$.  
Either $P$ is a minimal prime of $J_t(G)$, or contains a minimal
prime of $J_t(G)$, thus showing showing that
$t \leq r$.
By Lemma \ref{localization},  $\langle z,x_{i_1},\ldots,x_{i_r} \rangle
\in {\rm Ass}(J_t(G_P)^s)$, and so by
Theorem \ref{maintheoremstar},  $s(t-1) \geq r-1$,
i.e., $r \leq s(t-1) +1$.  Also, it is clear that $r \leq n$,
so $P \in \mathcal{P}$.

Conversely, suppose that $P = \langle z,x_{i_1},\ldots,x_{i_r} \rangle \in 
\mathcal{P}$.
Abusing notation, let $P \subseteq V_G$ denote the corresponding
vertices.  After localizing at $P$, $P \in {\rm Ass}(J_t(G_P)^s)$
by Theorem \ref{maintheoremstar} since $s(t-1) \geq r-1$.  
Lemma \ref{localization} then gives $P \in {\rm Ass}(J_t(G)^s)$.
\end{proof}

To prove Theorem \ref{maintheoremstar} we require
some information about our annihilator.

\begin{lemma}\label{annlemma}
Fix integers $n \geq t \geq 1$ and let $G = K_{1,n}$
be the star graph on $V_G = \{z,x_1,\ldots,x_n\}$.  Set $J_t = 
J_t(G)$.  Suppose that there exists a monomial
$T \in k[z,x_1,\ldots,x_n]$, $T \notin J_t^s$, such that
 $J_t^s:\langle T \rangle = \langle z,x_1,\ldots,x_n
\rangle$.  If $T = z^eT'$ where $z \nmid T'$, then
$T \mid z^e(x_1 \cdots x_n)^{s-e-1}.$
\end{lemma}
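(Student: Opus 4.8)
The plan is to argue purely with monomial divisibility, using the explicit description of the generators of $J_t$ supplied by Lemma~\ref{generators}: $\mathcal{G}(J_t)$ consists of $z$ together with all square-free monomials of degree $r:=n-t+1$ in $x_1,\dots,x_n$. Consequently a monomial lies in $J_t^s$ exactly when it is divisible by some product $z^a h_1\cdots h_{s-a}$ with $0\le a\le s$ and each $h_\ell$ a square-free monomial of degree $r$ in the $x$-variables. Writing $T=z^eT'$ with $z\nmid T'$ and setting $c_i:=\deg_{x_i}(T')$, the conclusion $T\mid z^e(x_1\cdots x_n)^{s-e-1}$ is equivalent to the assertion that $c_i\le s-e-1$ for all $i$, so that is the statement I would prove.

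First I would note that $e\le s-1$: since $z^s\in J_t^s$, the alternative $e\ge s$ would give $z^s\mid T$ and hence $T\in J_t^s$, contrary to hypothesis; in particular $s-e-1\ge 0$. Then fix an index $j$ and assume, toward a contradiction, that $c_j\ge s-e$. Since $\langle z,x_1,\dots,x_n\rangle=J_t^s:\langle T\rangle$, we have $Tx_j\in J_t^s$, so $Tx_j=g_1\cdots g_s M$ with each $g_i\in\mathcal{G}(J_t)$ and $M$ a monomial; as $\deg_z(Tx_j)=e$, at most $e$ of the $g_i$ equal $z$, so at least $s-e$ of them, say $h_1,\dots,h_{s-e}$, are square-free degree-$r$ monomials in the $x$'s. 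Their product is coprime to $z$ and divides $Tx_j=z^e(T'x_j)$, hence divides $T'x_j$.

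The crux is then a short degree comparison showing that in fact $h_1\cdots h_{s-e}\mid T'$: for the variable $x_j$ one has $\deg_{x_j}(h_1\cdots h_{s-e})\le s-e\le c_j=\deg_{x_j}(T')$ by the standing assumption, while for $i\ne j$ one has $\deg_{x_i}(h_1\cdots h_{s-e})\le\deg_{x_i}(T'x_j)=\deg_{x_i}(T')$. Then $z^e h_1\cdots h_{s-e}$, which is a product of $e+(s-e)=s$ elements of $\mathcal{G}(J_t)$, divides $z^eT'=T$, so $T\in J_t^s$ — the desired contradiction. Hence $c_j\le s-e-1$ for every $j$, which as noted gives $T\mid z^e(x_1\cdots x_n)^{s-e-1}$. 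I expect the only delicate point to be the bookkeeping in that last comparison — in particular, verifying the inequality at $x_j$ itself, which is exactly where the assumption $c_j\ge s-e$ is used, so that divisibility really descends from $T'x_j$ to $T'$. Note that no Hall- or flow-type result on writing a monomial as a product of square-free monomials is needed here, since only the trivial direction is used: a fixed product of generators dividing our monomial.
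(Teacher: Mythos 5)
Your proof is correct. It differs from the paper's in its endgame, though both start the same way: factor $x_jT=g_1\cdots g_sM$ into minimal generators and exploit the two structural facts from Lemma \ref{generators} (every minimal generator is either $z$ or a square-free degree-$(n-t+1)$ monomial in the $x$'s, and no generator involves both $z$ and an $x_i$). The paper, assuming $x_i^{s-e}\mid T'$, runs a swap argument: it first shows $x_i\nmid M$, then counts occurrences of $x_i$ among the square-free generators to conclude that at most $e-1$ of them equal $z$, which forces $z\mid M$; it then trades the $x_i$ in one generator for the $z$ in $M$ to re-express $T$ itself as a product of $s$ generators. You instead ignore $M$ and the $z$-generators entirely: counting $z$-degree yields at least $s-e$ generators $h_1,\dots,h_{s-e}$ that are square-free monomials in the $x$'s, and your degree comparison --- square-freeness bounds $\deg_{x_j}(h_1\cdots h_{s-e})$ by $s-e\le c_j$, while the other $x_i$-degrees are unaffected by the extra factor $x_j$ --- shows $h_1\cdots h_{s-e}$ already divides $T'$, so $z^e h_1\cdots h_{s-e}$ is a product of $s$ generators dividing $T$. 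This avoids the two auxiliary claims about $M$ ($x_i\nmid M$ and $z\mid M$) and is, if anything, slightly cleaner; you also make explicit the preliminary observation $e\le s-1$ (so that the exponent $s-e-1$ is nonnegative), which the paper leaves implicit.
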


\begin{proof}
It suffices to prove that $T' | (x_1\cdots x_n)^{s-e-1}$.  Suppose that 
there exists some $x_i$ such that $x_i^{s-e} | T'$. Now 
$x_iT = z^ex_iT' \in J_t^s$, so 
\[z^ex_iT' = m_1m_2 \cdots m_sM ~~\mbox{with $M \in k[z,x_1,\ldots,x_n]$ and 
$m_i \in \mathcal{G}(J_t)$}.\]
We cannot have $x_i|M$.  If it did, then we could cancel $x_i$ from
both sides and have $T =z^eT' = m_1\cdots m_s(M/x_i) \in J_t^s$, which
contradicts the fact that $T \not\in J_t^s$. So, the variable $x_i$
appears at least $s-e+1$ times in $z^ex_iT'$, and thus, must appear
in at least $s-e+1$ of $m_1,\ldots,m_s$, because each $m_j$ is 
square-free.  In particular, we can assume $m_1 = x_im'_1$.
This means at most $e-1$ of $m_1,\ldots,m_s$ can be equal to $z$  
(no minimal generator of $J_t$ is divisible by both
$z$ and $x_i$ by Lemma \ref{generators}).  
So, $z$ must divide $M$, i.e., $M = zM'$.
So, to summarize,
\[z^ex_iT' = m_1m_2\cdots m_sM = (x_im'_1)m_2\cdots m_s(zM').\]
If we cancel $x_i$ from both sides, we get
\[T = z^eT' = (m'_1)m_2\cdots m_s(zM').\]
But $m_2,\ldots,m_s,z  \in \mathcal{G}(J_t)$, which means $T \in J_t^s$. 
This is our desired contradiction.
\end{proof} 
 
We are now ready to prove Theorem \ref{maintheoremstar}.

\begin{proof}
(of Theorem \ref{maintheoremstar})  Note that if $t=1$, then
Example \ref{caset=1} implies
$\langle z, x_1,\ldots,x_n \rangle \in {\rm Ass}(J_1(G)^s)$ if and only
if $n=1$ if and only if $0 = s(t-1) \geq n-1$.  So, we assume $t > 1$.

$(i) \Rightarrow (ii)$.
If $\langle z,x_1,\ldots,x_n \rangle \in {\rm Ass}(J_t^s)$, then 
there exists a monomial $T \notin J_t^s$
such that $J_t^s: \langle T \rangle = \langle z,x_1,\ldots,x_n\rangle$. 
Rewrite $T$ as $T = z^eT'$ where $z \nmid T'$.  We now claim that 
\begin{equation}\label{specialmonomial}
z^e(x_1\cdots x_{n-t+2})^{s-e}(x_{n-t+3}\cdots x_n)^{s-e-1} \in J_t^{s+1}.
\end{equation}
Indeed, by Lemma \ref{annlemma}, $z^eT' | z^e(x_1\cdots x_n)^{s-e-1}$.
Now $x_1z^eT' \in J_t^s$, which means 
$$z^ex_1^{s-e}(x_2\cdots x_n)^{s-e-1} \in J_t^s.$$
But $x_2\cdots x_{n-t+2} \in J_t$, so multiplying these two elements together
gives us the desired element in $J_t^{s+1}$.  

We proceed by a degree argument.  
By \eqref{specialmonomial}
there
exist generators $m_1, \ldots, m_{s+1}$ of $J_t$ such that 
\[z^e(x_1\cdots x_{n-t+2})^{s-e}(x_{n-t+3}\cdots x_n)^{s-e-1} =m_1 \cdots m_{s+1}M.\]
By Lemma \ref{generators},  $f$ of these 
generators are of the form $z$, and the 
remaining $s+1-f$ generators are of degree $n-t+1$ and 
have the form $x_{j_1}\cdots x_{j_{n-t+1}}$ for some
$\{j_1,\ldots,j_{n-t+1}\} \subseteq \{1,\ldots,n\}$.  Note that we must have 
$f \leq e$, and thus, looking at the degree of the generators in the
$x$ variables, we must have
\[
(s+1-f)(n-t+1) \leq (n-t+2)(s-e) + (t-2)(s-e-1) = (s-e)n - (t-2).\]
Expanding out the left hand side gives
\[sn-st+s+n-t+1-fn+ft-f \leq sn -en -t + 2.\]
Removing $sn$ and $-t$  from both sides and using the fact that $-en \leq -fn$ and $0 \leq f(t-1)$
gives $-st+s+n \leq 1$, which implies $s(t-1) \geq n-1$,
as desired.

$(ii) \Rightarrow (i)$  Let $s_0 = \min\{s ~|~ s(t-1) \geq n-1 \}$.  We first
show that $\langle z,x_1,\ldots,x_n \rangle \in {\rm Ass}(J_t^{s_0})$.  

We construct our annihilator as follows.  Write out the variables $x_1,\ldots,x_n$
as a repeating sequence, i.e.,
\begin{equation}\label{word}
x_1,x_2,\ldots,x_n,x_1,x_2,\ldots,x_n,x_1,x_2,\ldots,x_n,x_1,\ldots.
\end{equation}
Let $T$ be the product of the first $s_0(n-t+1)-1$ variables in this sequence, 
that is,
\[T = \underbrace{x_1x_2\cdots x_nx_1x_2 \cdots x_nx_1 \cdots x_j}_{s_0(n-t+1)-1}.\]
The monomial $T \not\in J_t^{s_0}$.  We can see this by a degree argument because
$J_t$ is generated by monomials in the $x$ variables of degree $n-t+1$.

We make the crucial observation that the index $j$ of the last variable in
$T$ has the property that 
$n-t+1 \leq j \leq n$.  To see this, note that after $n-t+1$ steps in the sequence
\eqref{word} we are at vertex $x_{n-t+1}$, after $2(n-t+1)$ steps in the sequence
\eqref{word}, we are at the vertex $x_{n-2(t-1)} = x_{n-2t+2}$, after $3(n-t+1)$
steps, we are at $x_{n-3t+3}$, ..., and finally, after $(s_0-1)(n-t+1)$
steps, we are at vertex $x_{n-(s_0-1)(t-1)} = x_{n-s_0t+s_0+t-1}$.  By our choice of
$s_0$, $-s_0t+s_0 \leq -n+1$, so $n-s_0t+s_0+t-1 \leq t$.  In fact, after $(s_0-1)$
steps of size $(n-t+1)$ in our sequence \eqref{word}, this is the first time
we arrive at an index $\leq t$. At the same time, by our choice of $s_0$, we
have $(s_0-1)(t-1) < n-1$, so we are at an index $\geq 1$.
 When constructing $T$, we go an additional $n-t$
steps in the sequence.  This means that we arrive at an index between $n-t+1$ and $n$.

We next show $J_t^{s_0}:\langle T \rangle = \langle z,x_1,\ldots,x_n \rangle$.
Now $zT \in J_t^{s_0}$.  To see this, note that $z$ is a minimal generator of $J_t$, and 
every $n-t+1$ consecutive
variables in \eqref{word} is also a generator of $J_t$.  Thus, the product of the first
$(s_0-1)(n-t+1)$ elements of \eqref{word}
is in $J_t^{s_0-1}$, and so $z \in J_t^{s_0}:\langle T \rangle $.

Now take $x_i$ with $i \in \{1,\ldots, n\}$.  To show $x_iT \in J_t^{s_0}$, take the first $s_0(n-t+1)-1$ variables in \eqref{word}, and insert $x_i$ after its first appearance, i.e.,
\[
x_1,x_2,\ldots,x_i,x_i,x_{i+1},\ldots,x_n,x_1,x_2,\ldots,x_n,x_1,x_2,\ldots,x_n,x_1,\ldots,x_j.
\]
Think of these variables as being placed around a circle.  Starting at the second $x_i$,
move around the circle, grouping $n-t+1$ variables together.  Because we
have $s_0(n-t+1)$ variables, we end up with $s_0$ groups.  Because the index
of $j$ is between $n-t+1$ and $n$, each group will consist of $n-t+1$ distinct
variables, and thus, by Lemma \ref{generators}, when we multiply each group of $n-t+1$ distinct variables together, we
have a generator of $J_t$.  But this means that $x_iT \in J_t^{s_0}$ since 
$x_iT$ is expressed as a product of $s_0$ generators.
Thus, $\langle z,x_0,\ldots,x_n \rangle \subseteq J_t^{s_0}:\langle T \rangle \subsetneq \langle 1 \rangle$,
which completes the proof for the case $s_0$.

Now suppose that $s > s_0$.  Let $e = s-s_0$ and let $T$ be as above.  We will
show that $J_t^{s}:\langle z^eT \rangle = \langle z,x_1,\ldots,x_n \rangle$.
By a degree argument $z^eT \not\in J_t^s$, but $z(z^eT) \in J_t^s$ because, as noted above, $T \in J_t^{s_0-1}$ and $z^{e+1}
\in J_t^{e+1}$.  Similarly, $x_iz^eT \in J_t^{s}$ because
$z^e \in J_t^e$, and as above, $x_iT \in J_t^{s_0}$.  Hence
$J_t^{s}:\langle z^eT \rangle = \langle z,x_1,\ldots,x_n \rangle$.
\end{proof}

\subsection{An application}
Corollary \ref{corstar} allows us to answer
a question raised by Francisco, H\`a, and the third author
\cite{FHVT}.  We first recall some terminology.

A {\it hypergraph} $\mathcal{H}$ is a pair of sets
$\mathcal{H} = (\mathcal{X},\mathcal{E})$ where 
$\mathcal{X} = \{x_1,\ldots,x_n\}$
and $\mathcal{E}$ is a collection of subsets 
$\{E_1,\ldots,E_t\}$ with each $E_i \subseteq \mathcal{X}$.  We call
$\mathcal{H}$ a {\it simple} hypergraph if $|E_i| \geq 2$ for all $i$,
and if $E_i \subseteq E_j$, then $i=j$.   (When each $|E_i| = 2$, then
$\mathcal{H}$ is a finite simple graph.)    As in the case of 
graphs, we say a subset $W \subseteq \mathcal{X}$ is a {\it vertex
cover} if $W \cap E \neq \emptyset$ for all $E \in \mathcal{E}$.  In 
a manner analogous to the cover ideal, we can define the cover ideal of $\mathcal{H}$:
\[J(\mathcal{H}) = \langle x_W ~|~ 
W = \{x_{i_1},\ldots,x_{i_t}\} \subseteq \mathcal{X} ~~\mbox{is a vertex cover}
\rangle.\] 
A {\it colouring} of $\mathcal{H}$ is an assignment of a colour to each
vertex of $\mathcal{X}$ so that no edge $E$ is mono-coloured, i.e.,
each edge must contain at least two vertices of different colours.
The {\it chromatic number} of $\mathcal{H}$, denoted $\chi(\mathcal{H})$,
is the least number of colours required to colour $\mathcal{H}$.  The
chromatic number provides a lower bound on the index of stability of $J(\mathcal{H})$.

\begin{theorem}[{\cite[Corollary 4.9]{FHVT}}] For any finite
simple hypergraph $\mathcal{H}$, 
\[\chi(\mathcal{H}) -1 \leq {\rm astab}(J(\mathcal{H})).\]
\end{theorem}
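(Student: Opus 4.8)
The plan is to show that if $\chi(\mathcal{H}) = c$, then $\langle x_1,\ldots,x_n\rangle \in {\rm Ass}(J(\mathcal{H})^{c-1})$, for then Brodmann's stabilization forces ${\rm astab}(J(\mathcal{H})) \geq c-1$. By the localization principle (Lemma \ref{localization}) it suffices to produce the maximal ideal as an associated prime at the power $c-1$, so I would work directly with $\mathfrak{m} = \langle x_1,\ldots,x_n\rangle$. The strategy is to exhibit a monomial $T \notin J(\mathcal{H})^{c-1}$ with $J(\mathcal{H})^{c-1}:\langle T\rangle = \mathfrak{m}$.

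First I would fix an optimal colouring, partitioning $\mathcal{X} = C_1 \cup \cdots \cup C_c$ into colour classes, and set $T = x_{C_1}^{?}\cdots$ — more precisely, I expect the right choice to be built from the colour classes so that $T$ is a product of $c-2$ (or thereabouts) ``near-vertex-covers''. The key combinatorial fact is that the complement of a colour class, $\mathcal{X}\setminus C_i$, is a vertex cover of $\mathcal{H}$ (since every edge is non-mono-coloured, it cannot lie entirely inside a single $C_i$, hence meets $\mathcal{X}\setminus C_i$), so $x_{\mathcal{X}\setminus C_i} \in J(\mathcal{H})$. I would try $T = \prod_{i=1}^{c-1} x_{\mathcal{X}\setminus C_i}$ divided by one variable, or a similar expression chosen so that a degree/support count shows $T$ cannot be written as a product of $c-1$ vertex covers: the obstruction is that assembling $c-1$ vertex covers from the available variables would force $c-1$ disjoint ``transversal-free'' pieces, contradicting that $c$ is the chromatic number. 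Then $x_j T$ for each $j$ should become a product of exactly $c-1$ vertex covers by inserting the extra $x_j$ into the appropriate position, exactly as in the proof of Theorem \ref{maintheoremstar}.

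The verification splits into two parts in the usual way: (a) $T \notin J(\mathcal{H})^{c-1}$, which is the heart of the matter and is where the minimality of $\chi(\mathcal{H}) = c$ gets used — if $T$ were such a product, peeling off the vertex covers one at a time and tracking which variables remain would yield a colouring of $\mathcal{H}$ with $c-1$ colours; and (b) $x_j T \in J(\mathcal{H})^{c-1}$ for every variable $x_j$, which is a direct construction: multiplying by $x_j$ supplies exactly the one extra vertex cover needed, using that $\mathcal{X}\setminus C_i$ remains a vertex cover even after deleting a vertex outside some edge, together with the fact that $\{x_j\}\cup(\text{something})$ is again a vertex cover. Combining (a) and (b) gives $J(\mathcal{H})^{c-1}:\langle T\rangle \supseteq \mathfrak{m}$, and since $T \notin J(\mathcal{H})^{c-1}$ the colon ideal is proper, hence equals $\mathfrak{m}$.

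The main obstacle I anticipate is part (a): making precise the argument that a factorization $T = N_1 \cdots N_{c-1} M$ with each $N_i$ a generator of $J(\mathcal{H})$ yields a proper $(c-1)$-colouring. The subtlety is that the $N_i$ are vertex covers, not complements of colour classes, so one must argue that the ``leftover'' sets $\mathcal{X}\setminus (\text{support of } N_i)$, suitably chosen, are independent (edge-free) and cover all of $\mathcal{X}$; this requires care in bounding the multiplicities of each variable in $T$ so that no edge can end up entirely inside one leftover set. Getting $T$'s exponents exactly right — large enough to block a $(c-1)$-fold factorization but small enough that each $x_jT$ still factors — is the delicate balancing act, directly parallel to the role of $s_0(n-t+1)-1$ in the star-graph proof.
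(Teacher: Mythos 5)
First, a point of comparison: the paper does not actually prove this statement --- it is quoted verbatim from \cite[Corollary 4.9]{FHVT} --- so your proposal has to stand on its own, and as written it has two genuine gaps. The more serious one is that your central claim, that $\langle x_1,\ldots,x_n\rangle\in{\rm Ass}(J(\mathcal{H})^{c-1})$ whenever $\chi(\mathcal{H})=c$, is false for general $\mathcal{H}$. Take $\mathcal{H}$ to be a triangle $\{x_1,x_2,x_3\}$ together with a pendant edge $\{x_1,x_4\}$: then $c=3$, but $\langle x_1,x_2,x_3,x_4\rangle\notin{\rm Ass}(J(\mathcal{H})^2)$ (for this perfect graph the associated primes of the square correspond only to the edges and the triangle). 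The step of your outline that breaks is (b): with the natural witness $T=(x_1\cdots x_n)^{c-2}$, proving $x_jT\in J(\mathcal{H})^{c-1}$ amounts to producing a proper $(c-1)$-colouring of $\mathcal{H}$ with $x_j$ deleted, and that exists only when $\mathcal{H}$ is vertex-critical. The missing idea is to first pass to an induced subhypergraph $\mathcal{H}_P$ that is critically $c$-chromatic (delete vertices as long as the chromatic number survives), run your argument there, and then lift $P$ into ${\rm Ass}(J(\mathcal{H})^{c-1})$ by the localization lemma. Criticality is not a technical convenience; it is exactly what makes (b) true. Your part (a), by contrast, is sound: if $(x_1\cdots x_n)^{c-2}$ factored as a product of $c-1$ vertex covers, each vertex would miss at least one of them, and colouring each vertex by the index of a cover it misses gives a proper $(c-1)$-colouring (each colour class sits inside the complement of a cover, hence is independent), contradicting $\chi(\mathcal{H})=c$.

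The second gap is logical. Exhibiting $P\in{\rm Ass}(J^{c-1})$ does not, by itself, ``force'' ${\rm astab}(J)\geq c-1$: the index of stability is the first power from which the sets ${\rm Ass}(J^s)$ are all equal, so these sets could in principle have stabilized long before $c-1$ with $P$ already present. A lower bound of $c-1$ requires showing in addition that ${\rm Ass}(J^{c-2})\neq{\rm Ass}(J^{c-1})$, for instance by showing that the same prime $P$ does \emph{not} lie in ${\rm Ass}(J^{c-2})$; your proposal never addresses this. This is precisely why the present paper, when it wants exact values of ${\rm astab}$, determines the full sets ${\rm Ass}(J_t(G)^s)$ for every $s$ rather than producing a single associated prime at a single power. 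Both gaps are fixable (they are fixed in \cite{FHVT}), but each requires an idea that is absent from the proposal: the reduction to a critical subhypergraph, and the non-membership at the power $c-2$.
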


It was asked in \cite[Question 4.10]{FHVT} if for each
$m \geq 1$, there exists a hypergraph $\mathcal{H}_m$ with $\chi(\mathcal{H}_m)-1+m \leq
{\rm astab}(J(\mathcal{H}_m))$, that is, could the index of stability
be arbitrarily larger than the chromatic number.   Wolff \cite{W}
showed that this is the case, even if $\mathcal{H}$ is a finite simple
graph.  Wolff's family of graphs requires $5m-1$ vertices.    We can
use Corollary \ref{corstar} to give another answer to this question which only requires $m+3$ vertices.

\begin{theorem} Fix an $m \geq 1$, and let $\mathcal{H}_m = 
(\mathcal{X}_{m},\mathcal{E}_{m})$
where
$\mathcal{X}_m = \{z,x_1,\ldots,x_{m+2}\}$ and $\mathcal{E}_m = 
\{\{z,x_i,x_j\} ~|~ 1 \leq i < j \leq {m+2}\}$.  Then
\[\chi(\mathcal{H}_m)-1+m \leq {\rm astab}(J(\mathcal{H}_m)).\]
\end{theorem}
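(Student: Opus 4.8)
The plan is to recognize the cover ideal $J(\mathcal{H}_m)$ as the partial $2$-cover ideal of a star graph, and then to invoke Corollary~\ref{corstar} together with a direct computation of $\chi(\mathcal{H}_m)$.

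First I would identify the relevant graph. Let $G = K_{1,m+2}$ be the star graph on the vertex set $\{z, x_1, \ldots, x_{m+2}\}$ with center $z$, and I claim $J(\mathcal{H}_m) = J_2(G)$. To verify this, describe the vertex covers of $\mathcal{H}_m$ explicitly: a subset $W \subseteq \mathcal{X}_m$ meets every edge $\{z, x_i, x_j\}$ precisely when either $z \in W$, or $W$ contains at least $m+1$ of the vertices $x_1, \ldots, x_{m+2}$ (otherwise some pair $\{x_i, x_j\}$ avoids $W$). Hence the minimal generators of $J(\mathcal{H}_m)$ are $z$ together with all products $x_{j_1} \cdots x_{j_{m+1}}$ ranging over the $(m+1)$-subsets of $\{1, \ldots, m+2\}$; by Lemma~\ref{generators} (with $n = m+2$ and $t = 2$) these are exactly the minimal generators of $J_2(G)$.

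Next I would apply Corollary~\ref{corstar} with $n = m+2$ and $t = 2$. Since $t > 1$, this gives ${\rm astab}(J(\mathcal{H}_m)) = {\rm astab}(J_2(K_{1,m+2})) = \min\{s \mid s(t-1) \geq n-1\} = \min\{s \mid s \geq m+1\} = m+1$. It then remains to compute $\chi(\mathcal{H}_m)$: colouring $z$ one colour and every $x_i$ a second colour leaves no edge $\{z, x_i, x_j\}$ mono-coloured, while a single colour fails because $\mathcal{H}_m$ has edges (here $m+2 \geq 3$), so $\chi(\mathcal{H}_m) = 2$. Combining, $\chi(\mathcal{H}_m) - 1 + m = m+1 = {\rm astab}(J(\mathcal{H}_m))$, which establishes the inequality --- in fact with equality.

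The only step that is not pure bookkeeping is the identification $J(\mathcal{H}_m) = J_2(K_{1,m+2})$, namely matching the minimal vertex covers of the hypergraph with the partial $2$-covers of the star graph; once this translation is in place, Corollary~\ref{corstar} does all the work, and the hypergraph has just $m+3$ vertices, as claimed.
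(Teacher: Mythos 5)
Your proposal is correct and follows essentially the same route as the paper: identify $J(\mathcal{H}_m)$ with $J_2(K_{1,m+2})$, compute $\chi(\mathcal{H}_m)=2$, and apply Corollary~\ref{corstar}. The only difference is that you spell out the verification of the identification $J(\mathcal{H}_m)=J_2(K_{1,m+2})$ (which the paper merely asserts) and you note that the inequality is in fact an equality; both additions are accurate.
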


\begin{proof}
First, $\chi(\mathcal{H}_m) = 2$ because each $x_i$ can be assigned
the same colour, and $z$ can be given a different colour.  Note
that $J(\mathcal{H}_m) = J_2(K_{1,m+2})$.  By Corollary \ref{corstar}, 
${\rm astab}(J(\mathcal{H}_m)) = 
{\rm astab}(J_2(K_{1,m+2})) \geq  m+1 = (2-1)+m  = 
\chi(\mathcal{H}_m)-1+m$.    
\end{proof}


\section{Associated primes of Generalized cover ideals of trees}

In this section we completely determine the 
associated primes of the ideals $J_t(\Gamma)^s$ 
when $\Gamma$ is a {\it tree}, that is,  a graph with no induced cycles.    
Theorem \ref{maintheorem} will follow directly from this result.
We begin by stating the main theorem of this section:

\begin{theorem} \label{maintheoremtrees}
Fix an integer $t \geq 1$ and let $\Gamma$ be a tree
on $n$ vertices.
Then for all $s \geq 1$,
\[{\rm Ass}(J_t(\Gamma)^s) = 
\left. \left\{ P = \langle x_{i_0},x_{i_1},\ldots,x_{i_r} \rangle
~\right|~  \Gamma_P = K_{1,r} ~\mbox{with $
t \leq r \leq \min\{n,s(t-1)+1\}$}\right\}.\]
\end{theorem}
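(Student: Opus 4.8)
The plan is to reduce everything to the star graph case already handled in Corollary \ref{corstar}, using Lemmas \ref{localization} and \ref{connected} to pass between $\Gamma$ and its induced subgraphs. First I would prove the containment $\supseteq$. Suppose $P = \langle x_{i_0}, \dots, x_{i_r}\rangle$ with $\Gamma_P = K_{1,r}$ and $t \leq r \leq \min\{n, s(t-1)+1\}$. After localizing at $P$, Lemma \ref{localization} says it suffices to show $P \in {\rm Ass}(J_t(\Gamma_P)^s)$; since $\Gamma_P = K_{1,r}$, Corollary \ref{corstar} gives exactly this, because $t \leq r \leq s(t-1)+1$ is the condition appearing there. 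So the nontrivial direction is $\subseteq$: every associated prime of $J_t(\Gamma)^s$ has the claimed form.

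For the $\subseteq$ direction, let $P = \langle x_{i_0},\dots,x_{i_r}\rangle \in {\rm Ass}(J_t(\Gamma)^s)$. By Lemma \ref{localization} we may localize and assume $P = \langle x_1,\dots,x_n\rangle$ is the whole maximal ideal, so $\Gamma = \Gamma_P$. By Lemma \ref{connected}, $\Gamma$ is connected; being an induced subgraph of a tree, it is itself a tree, i.e. connected and acyclic. The key claim is then that $\Gamma$ must in fact be a star $K_{1,n-1}$ (indexing so $n = r+1$). The heuristic is that the maximal ideal being associated forces a very "tight" combinatorial configuration, and in a tree any two leaves far apart give extra room to absorb generators — a ``non-star'' tree has a vertex $x$ of degree $\geq 2$ that is not the center, and one can use the product structure of $\mathcal{G}(J_t(\Gamma))$ across the two components obtained by deleting an edge at $x$ (the same splitting idea as in the proof of Lemma \ref{connected}) to manufacture, for any candidate annihilator $T$, either a witness that $T \in J_t(\Gamma)^s$ or a proper subset of the variables still annihilating into the maximal ideal — contradicting that the full maximal ideal is associated. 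Once $\Gamma = K_{1,n-1}$, Corollary \ref{corstar} applied to the star gives $t \leq n-1 \leq s(t-1)+1$, which is precisely $t \leq r \leq \min\{n, s(t-1)+1\}$ (the bound $r \leq n$ being automatic), finishing the argument. Theorem \ref{maintheorem} then follows by reading off ${\rm astab}(J_t(\Gamma))$ as the least $s$ for which some $P$ with the maximal allowed $r = \Delta(\Gamma)$ (the degree of the center of a largest star induced in $\Gamma$, equivalently the maximum degree) appears, i.e. $s(t-1)+1 \geq \Delta(\Gamma)$; the persistence property is immediate from the explicit description, since the set on the right grows with $s$.

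I expect the main obstacle to be proving that $\Gamma$ must be a star, i.e. ruling out all other trees. The delicate point is that one cannot simply invoke Lemma \ref{connected} a second time: deleting an edge of $\Gamma$ disconnects it, but the induced subgraph on a vertex subset $P' \subsetneq V_\Gamma$ need not be disconnected, so the obstruction to $P'$ being associated must come from a more careful analysis of how powers of $J_t$ interact with the tree structure. The likely route is an induction on $n$: pick a leaf $x$ of $\Gamma$ with neighbor $y$; if $\deg(y) = 1$ then $\Gamma = K_2 = K_{1,1}$ and we are done, while if $\deg(y) \geq 2$, analyze the annihilator $T$ witnessing $\langle x_1,\dots,x_n\rangle \in {\rm Ass}(J_t(\Gamma)^s)$ by tracking the exponent of the leaf variable $x$ and using that the only generators of $J_t(\Gamma)$ divisible by $x$ are those coming from covers ``local to $y$'' — this should let one either strip $x$ off and descend to $\Gamma \setminus x$ (still a tree, with the maximal ideal associated to a lower power), or derive a contradiction unless every non-leaf has exactly one non-leaf neighbor arranged so that $\Gamma$ collapses to a star. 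Getting this bookkeeping exactly right, and in particular handling the interaction between the $t-1$ ``uncovered'' edges allowed at each vertex and the branching of the tree, is where the real work lies.
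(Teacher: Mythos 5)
Your reduction to the star case is exactly right and matches the paper: the containment $\supseteq$ follows from Lemma \ref{localization} together with Corollary \ref{corstar}, and for $\subseteq$ one localizes, invokes Lemma \ref{connected}, and must then show that the resulting connected tree is a star. But that last step is the entire content of the theorem beyond Section 3, and your proposal does not prove it — it offers two heuristics and explicitly defers the ``bookkeeping.'' Worse, the first heuristic is based on a false analogy: you suggest reusing the splitting idea from the proof of Lemma \ref{connected} across the two components obtained by deleting an edge at a degree-$\geq 2$ vertex. That argument works for a disjoint union precisely because every minimal generator of $J_t(G_1 \cup G_2)$ factors as a product of a generator of $J_t(G_1)$ and one of $J_t(G_2)$. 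Deleting an edge of a connected tree does not give such a factorization: the partial $t$-cover condition at the two endpoints of the deleted edge couples the two sides, so $\mathcal{G}(J_t(\Gamma))$ is not a product of the generator sets of the pieces, and the recombination trick that produced $T \in J_t(G)^s$ there has no analogue here.

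The paper's actual route is quite different and substantially harder. It takes the vertex $x$ guaranteed by Lemma \ref{specialvertex} (all but one neighbour $y_d$ is a leaf), proves a structure lemma (Lemma \ref{structure}) classifying the minimal generators of $J_t(\Gamma)$ into three types according to how they meet $\{x, y_1, \ldots, y_d\}$, and then assumes $y_d$ has a further neighbour $w \in P$. Writing $T y_1$ as a product of $s$ generators times a remainder $U$, it shows $y_1 \nmid U$ and $x \nmid U$ by explicit generator swaps, and then runs a two-case count of the total multiplicity of the leaf variables $y_{j+1}, \ldots, y_{d-1}$ (resp.\ $y_d$) on both sides of the factorization, forcing some $y_k$ to divide $U$ and then swapping again to conclude $T \in J^s$, a contradiction. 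Your alternative suggestion of a leaf-stripping induction on $n$ is not obviously wrong, but it is not carried out, and the delicate interaction you yourself flag — between the $t-1$ uncovered edges allowed at each vertex and the branching of the tree — is exactly what the paper's counting argument is built to control. As it stands the proposal has a genuine gap at the central step.
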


\noindent
In other words, a prime is associated to $J_t(\Gamma)^s$ if and only if
the corresponding induced subgraph in $\Gamma$ is a star of a particular
size.

We require the following lemma which can be found in
\cite[Proposition 4.1]{JK}).  This lemma will
gives us some insight into the generators of $J_t(\Gamma)$.

\begin{lemma}\label{specialvertex}
For any tree $\Gamma$, there exists a vertex $x$ such that all, but
possibly one, of its neighbours have degree $1$.
\end{lemma}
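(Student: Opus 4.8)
The plan is to give a short, self-contained proof by a longest-path argument; this is a classical fact about trees (and, as the authors note, it also appears as \cite[Proposition 4.1]{JK}), so the work is entirely in the bookkeeping rather than in any new idea. If $\Gamma$ has no edges (in particular if $n \le 1$) the conclusion is vacuous, so I would assume $\Gamma$ has at least one edge and fix a path $v_0, v_1, \ldots, v_k$ in $\Gamma$ of maximum possible length $k \ge 1$.

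I claim that $x = v_1$ is a vertex with the required property. The first observation is that $v_0$ must be a leaf: a neighbour of $v_0$ other than $v_1$ would either lie off the path, so that prepending it yields a longer path (contradicting maximality), or lie on the path, so that it closes a cycle (impossible, since $\Gamma$ is a tree). Next, let $w$ be any neighbour of $v_1$ with $w \ne v_2$; I want to show $w$ has degree $1$. If $w = v_0$ this was just shown, so assume $w \notin \{v_0, v_2\}$; then $w$ cannot be any other $v_i$ either, since the edge $v_1 v_i$ together with the subpath $v_1 v_2 \cdots v_i$ would form a cycle, so $w$ lies off the path entirely. Now suppose $w$ had a neighbour $w' \ne v_1$. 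Then $w'$ cannot equal any $v_i$ (each choice closes a cycle through $v_1$ and $w$: a triangle if $w' \in \{v_0,v_2\}$, a longer cycle otherwise) nor $w$ itself, so $w', w, v_1, v_2, \ldots, v_k$ is a path on $k+2$ distinct vertices, i.e.\ of length $k+1$, again contradicting the maximality of $k$. Hence every neighbour of $v_1$ except possibly $v_2$ has degree $1$, which is exactly the assertion.

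The only delicate point is the case analysis showing that the putative longer path $w', w, v_1, \ldots, v_k$ genuinely has no repeated vertex — this is precisely where the acyclicity of $\Gamma$ enters, and I would spell out the subcases $w' = v_0$, $w' = v_2$, and $w' = v_i$ with $i \ge 3$ separately. The degenerate case $k = 1$ causes no trouble: then $\Gamma$ near the path is just the single edge $v_0 v_1$ and every neighbour of $v_1$ has degree $1$. A proof by induction on $n$ (or by iterated leaf-deletion) is also possible, but the longest-path argument is the most transparent, and that is the one I would write up. I do not anticipate any real obstacle.
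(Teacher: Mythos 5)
Your longest-path argument is correct and complete: the case analysis showing that $w$ and $w'$ lie off the path (else a cycle) and that $w',w,v_1,\ldots,v_k$ has distinct vertices is exactly the point that needs care, and you handle it. Note that the paper does not prove this lemma at all --- it only cites \cite[Proposition 4.1]{JK} --- so there is no in-paper argument to compare against; your self-contained proof is the standard one and would serve as a perfectly good replacement for the citation.
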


We fix some notation to be used throughout the remainder of this
paper.  Let $\Gamma$ be a tree, and let $x$
be the vertex of Lemma \ref{specialvertex} with neighbours $y_1,\ldots,y_d$.
We can assume that $\deg y_1 = \cdots = \deg y_{d-1} = 1$
and $\deg y_d \geq 1$.  Using this notation, we have:

\begin{lemma}  \label{structure}
Let $\Gamma$ be a tree with partial $t$-cover ideal $J_t(\Gamma)$.
If $m \in \mathcal{G}(J_t(\Gamma))$, then $m$ has one of the following forms:
\begin{enumerate}
\item[$(i)$]  $m = y_{i_1}\cdots y_{i_{d-t+1}}m'$
\item[$(ii)$] $m = xm'$
\item[$(iii)$] $m = xy_dm'$
\end{enumerate} 
where in each case, $m'$ is not divisible by any of the variables $y_1, \dots, y_d$, $x$.  
\end{lemma}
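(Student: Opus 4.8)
The plan is to exploit the structure of $J_t(\Gamma)$ as an intersection of ideals $\langle x', x'_{i_1}, \ldots, x'_{i_t}\rangle$ ranging over vertices $x'$ and $t$-subsets of $N(x')$, focusing on the constraints imposed by the special vertex $x$ of Lemma \ref{specialvertex} and its leaf-neighbours $y_1, \dots, y_{d-1}$. Let $m \in \mathcal{G}(J_t(\Gamma))$, so $m = x_W$ for some $W \subseteq V_\Gamma$ that is a partial $t$-cover by Lemma \ref{genspartial}, and moreover $W$ is minimal with this property (removing any vertex destroys the partial $t$-cover condition). The first step is to apply the partial $t$-cover condition at the vertex $x$ itself: either $x \in W$, or some subset $S \subseteq N(x) = \{y_1,\ldots,y_d\}$ with $|S| = d - t + 1$ lies in $W$. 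This immediately splits into the case $x \notin W$ (which will yield form $(i)$) and $x \in W$ (which will yield forms $(ii)$ and $(iii)$).

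In the case $x \notin W$, I would argue that $W$ must contain at least $d - t + 1$ of the vertices $y_1,\ldots,y_d$, and that by minimality of $W$ it contains \emph{exactly} $d - t + 1$ of them, none of which can be $x$ (it isn't) and such that $m = y_{i_1}\cdots y_{i_{d-t+1}} m'$ with $m'$ the monomial on the remaining variables. The point requiring care here is \emph{which} $y_j$ appear: since $y_1, \dots, y_{d-1}$ are leaves, the only covering constraint each $y_j$ ($j < d$) participates in is the one at $x$; so $m'$ being a monomial involving some $y_j$ with $j \le d-1$ would be redundant, contradicting minimality. Hence $m'$ is divisible by none of $y_1, \dots, y_d, x$ — for $y_d$ one uses that $y_d$ is either among the chosen $d-t+1$ vertices or, if not, adding it would again be redundant given the other constraints it satisfies are already met. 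In the case $x \in W$, write $m = x m''$; then either $y_d \nmid m''$, giving form $(ii)$ after setting $m' = m''$, or $y_d \mid m''$, giving form $(iii)$ with $m = x y_d m'$. In both subcases one must check $m'$ is divisible by none of the listed variables: $x$ appears exactly once by square-freeness; and no $y_j$ with $j \le d-1$ can divide $m'$, because the only constraint such a leaf satisfies is the one at $x$, which is already satisfied by $x \in W$, so including $y_j$ contradicts minimality.

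The main obstacle I anticipate is the bookkeeping around \emph{minimality} of $W$ and correctly ruling out the leaf variables $y_1, \dots, y_{d-1}$ from $m'$: one must verify that each such $y_j$ genuinely participates in no covering constraint other than the one at $x$ (true because $\deg y_j = 1$, so $N(y_j) = \{x\}$ and the only ideals $\langle x', \ldots \rangle$ in the intersection that could force $y_j \in W$ are the one indexed by $x'=y_j$, namely $\langle y_j, x\rangle$ appearing only when $t=1$, and the ones indexed by $x' = x$), and then that once the constraint at $x$ is otherwise satisfied, $y_j \notin W$. A clean way to organize this is: first record that, for a minimal partial $t$-cover $W$ and any vertex $v$, $v \in W$ only if some covering constraint at some vertex $x'$ requires it; then specialize to $v = y_j$ a leaf. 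The cases $t = 1$ versus $t > 1$ may need a brief separate remark since the constraint $\langle y_j, x\rangle$ only appears when $t=1$, but even then it is subsumed by form $(i)$ with $d - t + 1 = d$. Once this minimality analysis is in hand, the three-way split is routine.
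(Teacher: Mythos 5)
Your overall strategy is the same as the paper's: identify minimal generators with minimal partial $t$-covers via Lemma \ref{genspartial}, split on whether $x \in W$, and use minimality of $W$ together with the fact that $y_1,\ldots,y_{d-1}$ are leaves to rule out superfluous neighbours of $x$ from $m'$. The case $x \in W$ and the elimination of the leaves $y_1,\ldots,y_{d-1}$ from $m'$ are handled correctly. However, there is one step whose justification, as stated, would fail: in the case $x \notin W$, you rule out $y_d \mid m'$ by saying that if $y_d$ is not among the chosen $d-t+1$ covering vertices then adding it ``would again be redundant given the other constraints it satisfies are already met.'' That is not true in general. The vertex $y_d$ is precisely the one neighbour of $x$ that may have further neighbours $w \neq x$, and $y_d$ may be forced into $W$ in order to satisfy the covering condition at such a $w$ (if $w \notin W$, every subset of $N(w)$ of size $|N(w)|-t+1$ contained in $W$ may have to contain $y_d$). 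So $y_d$ need not be removable from $W$, and you cannot conclude $y_d \notin W$ by deleting it.

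The repair is an exchange argument, which is what the paper does. Suppose $S \subseteq N(x)$ with $|S| = d-t+1$ and $S \subseteq W$ witnesses the condition at $x$, and suppose $y_d \in W \setminus S$. Then every element of $S$ is a leaf, and $S' = (S \setminus \{y_{i_1}\}) \cup \{y_d\}$ is another witness contained in $W$; now the vertex being discarded, $y_{i_1}$, is a leaf, so $W \setminus \{y_{i_1}\}$ is still a partial $t$-cover, contradicting minimality. Equivalently: if $|W \cap N(x)| \geq d-t+2$, then $W \cap N(x)$ contains at least $d-t+1$ leaves (at most one element of $N(x)$ is a non-leaf by Lemma \ref{specialvertex}), and removing any one of those leaves from $W$ leaves a partial $t$-cover. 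In short, the vertex you delete by minimality must always be a \emph{leaf} in $W \cap N(x)$, never $y_d$ itself. With this substitution your plan coincides with the paper's proof.
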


\begin{proof}
By Lemma \ref{genspartial}, the minimal generators of $J_t(\Gamma)$ correspond
to the minimal partial $t$-covers of $\Gamma$.   The result will follow if we look at
the corresponding statement for minimal partial $t$-covers of $\Gamma$.

Let $W$ be a minimal partial $t$-cover of $\Gamma$.  First, suppose that $x \not\in W$.
By definition, $W$ must contain a subset $S \subseteq N(x)$ of size
$|N(x)|-t+1 = d-t+1$.  Because $N(x) = \{y_1,\ldots,y_d\}$, let us say that
$S = \{y_{i_1},\ldots,y_{i_{d-t+1}}\}$.  It now suffices to show that $W \setminus S$
does not contain any other neighbours of $x$.  If $t=1$, then $S = N(x)$, so this is
clear.  So, suppose that $t \geq 2$, and suppose that there is some 
$y_j \in N(x)\cap (W \setminus S)$.  There are two cases to consider:  $j \neq d$
and $j = d$.

If $j \neq d$, then Lemma \ref{specialvertex} gives $\deg y_j = 1$.  We claim
that $(W \setminus \{y_j\})$ is also a partial $t$-cover of $\Gamma$, thus
contradicting the minimality of $W$.  Indeed,
take any vertex $z$ of $\Gamma$.  Because $y_j$ is only adjacent to $x$,
for any vertex $z \not\in \{y_j,x\}$,  either $z$ is in $(W \setminus \{y_j\}) \subseteq 
W$ or all but
perhaps $t-1$ of the neighbours of $z$ are in $(W \setminus \{y_j\}) \subseteq
W$.  
We know that $x \not\in W$,
but because $S \subseteq (W\setminus \{y_j\}) \subseteq W$, 
we know that all but perhaps $t-1$ of the neighbours
of $x$ are in $(W \setminus \{y_j\})$.  Finally, although $y_j \not\in W$,
all but perhaps $t-1 \geq 2-1 =1$ of its neighbours belong to $W$.  But since $y_j$ only has
the neighbour $x$, $(W\setminus \{y_j\})$ is also a partial $t$-cover.
If $j = d$, then we can simply repeat the above argument to show that 
$(W \setminus \{y_{i_1}\})$ (remove one the vertices of $S$, but keep $y_d$)
creates a smaller partial $t$-cover.

Now consider the case that $x \in W$.  It suffices to show that 
$\{y_1,\ldots,y_{d-1}\} \cap W = \emptyset$. Then we will have the form $(ii)$
if $y_d \not\in W$, and the form $(iii)$ if $y_d \in W$.  Suppose that 
$y_j \in \{y_1,\ldots,y_{d-1}\} \cap  W$.  We claim that $(W \setminus \{y_j\})$ would also be 
a partial $t$-cover.  By Lemma \ref{specialvertex}, $\deg y_j = 1$, and $y_j$ is
only adjacent to $x$.  As argued above, for any vertex $z \not\in \{y_j,x\}$,
either $z$ or all but perhaps $t-1$ of its neighbours will belong to $(W \setminus \{y_j\})$.
The vertex $x$ is in $(W \setminus \{y_j\})$, and as for $y_j$, although 
$y_j \not\in (W \setminus \{y_j\})$, the unique edge containing $y_j$ is covered by $x$.
So $(W \setminus \{y_j\})$ is a partial $t$-cover, contradicting the minimality of $W$.
\end{proof}

\begin{proof} (of Theorem \ref{maintheoremtrees})
Let $\mathcal{P}$ denote the set on the right. 
Lemma \ref{localization} and Corollary
\ref{corstar} imply that every induced star graph of $\Gamma$ of the appropriate size will contribute an associated
prime;  more precisely, we already have $\mathcal{P} \subseteq {\rm Ass}(J_t(\Gamma)^s)$.
It therefore suffices to show that if $P \in {\rm Ass}(J_t(\Gamma)^s)$, then
$\Gamma_P$ is a star graph.  Corollary \ref{corstar} and 
Lemma \ref{localization} then imply the condition on the
size of the star graph, thus showing $P \in \mathcal{P}$.

We let $J = J_t(\Gamma)$.  If $P \in \operatorname{Ass}(J^s)$, 
by Lemma \ref{localization}
we can assume that $\Gamma_P = \Gamma$
and by Lemma \ref{connected}, we can assume that $\Gamma$ is connected.  Because
$\Gamma$ is a tree, so is $\Gamma_P$.   
So, we can apply Lemma \ref{specialvertex}.  That is, we can assume that
there is a vertex $x$ with neighbours $y_1,\ldots,
y_d$ such that $\deg y_1 = \cdots =\deg y_{d-1} = 1$, and $\deg y_d \geq 1$
in $\Gamma_P$.  It suffices to show that $\deg y_d= 1$.  Since $\Gamma_P$ is connected,
this would mean $\Gamma_P = K_{1,d}$.

So, suppose $y_d$ has a neighbour, say $w \neq x$.
We thus have $P = \langle y_1,\ldots,y_d,x,w,\ldots \rangle$.  We now
want to build a contradiction from this information.

Since $P \in \operatorname{Ass}(J^s)$, there exists
a monomial $T\notin J^s$ such that $J^s:\langle T\rangle = P$.  Because
$w \in P$, 
\[Tw = m_1\cdots m_sM ~~\mbox{with $m_i \in \mathcal{G}(J)$}.\]
By Lemma \ref{structure}, a generator of $J$ has one of three forms.
Let's say that $a$ of $m_1,\ldots,m_s$ are of type $(i)$, $b$ of $m_1\ldots,m_s$ are of type $(ii)$,
and $c$ are of type $(iii)$.  We then
have
\[T = T'y_1^{e_1}y_2^{e_2}\cdots y_{d-1}^{e_{d-1}}y_d^{e_d+c}x^{b+c}\] 
where $e_1+\dots+e_d = (d-t+1)a$ and $a+b+c=s$.  
Without loss of generality we may assume that $e_1= \max\{e_1, \dots, e_{d-1}\}$.

We now consider $Ty_1$.  Since $y_1 \in P$, $Ty_1 \in J^s$, that is,
\[Ty_1 = u_1\cdots u_sU ~~\mbox{with $u_j \in \mathcal{G}(J)$}.\]
First, note that $y_1$ does not divide $U$, since if it did we would then have
$T =  u_1\cdots u_s(U/y_1) \in J^s$, a contradiction.
Since $Ty_1 =  T'y_1^{e_1+1}y_2^{e_2}\cdots y_{d-1}^{e_{d-1}}y_d^{e_d+c}x^{b+c}$,
this means that (at least) $e_1+1$ of the generators $u_1,\ldots,u_s$
are divisible by $y_1$.  We may assume that after reordering 
that these generators are $u_1,\ldots,u_{e_1+1}$.

We next observe that $x$ also does not divide $U$.  
To see why, suppose that $U = xU'$.
As noted above,  
$u_1 = y_1y_{i_2} \cdots y_{i_{d-t+1}}m$ for some monomial $m$ not divisible by $x$.  
Note that $(u_1x)/y_1 = xy_{i_2} \cdots y_{i_{d-t+1}}m$ will also be a non-minimal
generator of $J$.  This means that 
\begin{eqnarray*}
Ty_1 &= &u_1\cdots u_sU = 
(y_1y_{i_2} \cdots y_{i_{d-t+1}}m)u_2\cdots u_s(xU') \\
& = & (xy_{i_2} \cdots y_{i_{d-t+1}}m)u_2\cdots u_s(y_1U').
\end{eqnarray*}
If we now cancel $y_1$ from both sides, this implies that $T \in J^s$, a contradiction.  So
$x$ cannot divide $U$, and thus at least $b+c$ of $u_1,\ldots,u_s$ are divisible
by $x$.  By Lemma \ref{structure}, they cannot be among
$u_1,\ldots,u_{e_1+1}$ since these are all divisible by $y_1$.
Let us say that they are $u_{e_1+2},\ldots,u_{e_1+b+c+1}$. 
To summarize, we now have
\begin{eqnarray*}
Ty_1 &=&  \underbrace{u_1\cdots u_{e_1+1}}_{\mbox{all divisible by $y_1$}}\cdot
\underbrace{u_{e_1+2}\cdots u_{e_1+1+b+c}}_{\mbox{all divisible by $x$}}\cdots u_s
U.
\end{eqnarray*}

We finish the proof by counting the degrees of the variables $y_2, \ldots, y_d$ in $Ty_1$.  
There are two cases to consider: ({\it Case 1}) there is a generator among 
$u_1, \ldots, u_s$ of type $(iii)$; and ({\it Case 2}) there is no generator 
among  $u_1, \ldots, u_s$ of type $(iii)$.

{\it Case 1:}  Suppose there is some $u_j = xy_dm$.  Then $y_d$ must divide 
every generator among $u_1, \dots, u_s$ of type $(i)$.  To see why, 
suppose that there is some generator $u_r = y_1y_{i_2}\cdots y_{i_{d-t+1}}m'$ 
with $y_{i_\ell} \neq y_d$ for all $2\leq \ell \leq d-t+1$.
\begin{eqnarray*}
Ty_1 =  u_1\cdots u_r \cdots u_j \cdots u_sU
&=&  u_1 \cdots (y_1y_{i_2}\cdots y_{i_{d-t+1}}m')\cdots (xy_dm) \cdots u_sU\\
	&=&  u_1 \cdots (xm') \cdots (y_{i_2}\cdots y_{i_{d-t+1}}y_dm)\cdots u_s (y_1U).
\end{eqnarray*}
Note that $xm', y_{i_2}\cdots y_{i_{d-t+1}}y_dm \in\mathcal{G}(J)$.  If we cancel $y_1$ from both sides, we get $T \in J^s$, which is a contradiction.  

Similarly, suppose that there is some generator 
$u_r = y_{i_1}\dots y_{i_{d-t+1}}m'$ with $y_{i_\ell} \neq y_1, y_d$ 
for all $1\leq \ell \leq d$, and let 
$u_1 = y_1y_{k_2}\cdots y_{k_{d-t}}y_dm''$ (since $u_1$ is divisible by $y_1$, 
it must also be divisible by $y_d$ by above).  Since $y_{i_\ell} \neq y_1$ for
 all $1\leq \ell \leq d-t+1$, there is some variable among $y_{i_1}, \ldots, y_{i_{d-t+1}}$
 which does not divide $u_1$.  Without loss of generality, assume that 
$y_{i_1}$ does not divide $u_1$.  Then
\begin{eqnarray*}
Ty_1 &=&  u_1\cdots u_r \cdots u_j \cdots u_sU\\
	&=&   (y_1y_{k_2}\cdots y_{k_{d-t}}y_dm'') \cdots (xy_dm) 
\cdots (y_{i_1}y_{i_2}\cdots y_{i_{d-t+1}}m')\cdots u_sU\\
	&=&  (y_{i_1}y_{k_2}\cdots y_{k_{d-t}}y_dm'') \cdots 
(y_{i_2}\cdots y_{i_{d-t+1}}y_dm) \cdots (xm')\cdots u_s (y_1U).
\end{eqnarray*}
The monomials $xm'$, $y_{i_2}\cdots y_{i_{d-t+1}}y_dm$, and 
$y_{i_1}y_{k_2}\cdots y_{k_{d-t}}y_dm''$ are generators of $J$, so if we cancel $y_1$ from both sides this leads to the contradiction $T \in J^s$.  So if there is some $u_j = xy_dm$, then every generator of type $(i)$ among $u_1, \dots, u_s$ is divisible by $y_d$.  
  
Now consider the monomials $u_1, \ldots, u_{e_1+1}$.  After relabeling, 
we may assume that $y_1, \ldots, y_j$ and $y_d$ divide all of $u_1, \ldots, u_{e_1+1}$, 
and that each of the remaining variables $y_{j+1}, \ldots, y_{d-1}$ do not divide 
at least one of the generators $u_1, \dots, u_{e_1+1}$.  We now count the number 
of times that the variables $y_{j+1}, \ldots, y_{d-1}$ occur in the generators 
$u_1, \ldots, u_s$.  Each of $u_1, \ldots u_{e_1+1}$ are divisible by exactly 
$d-t+1$ of the variables $y_1, \ldots, y_d$ including $y_1, \ldots, y_j$ and $y_d$. 
 Therefore exactly $d-t+1-(j+1) = d-t-j$ of the variables $y_{j+1}, \ldots, y_{d-1}$ 
divide each of $u_1, \ldots, u_{e_1+1}$.  In addition, the variables $y_{j+1}, \ldots, y_{d-1}$
 may divide each of the monomials $u_{e_1+b+c+2}, \ldots, u_s$ (there
 are $s-(e_1+b+c+1) = a-e_1-1$ such monomials).   Since $y_d$ divides every 
generator of type $(i)$ in the list $u_1, \ldots, u_s$, at most $d-t$ of the 
variables $y_{j+1}, \ldots, y_{d-1}$ divide each of the generators $u_{e_1+b+c+2}, \ldots, u_s$. 
 In total, the number of times that the variables $y_{j+1}, \ldots, y_{d-1}$ divide
 the monomials $u_1, \ldots, u_s$ is at most
\[
(d-t-j)(e_1+1) + (d-t)(a-e_1-1) = (d-t)a - j e_1 -j.
\]
On the other hand, since $T = T'y_1^{e_1}\cdots y_{d-1}^{e_{d-1}}y_d^{e_d+c}x^{b+c}$, 
the number of times that the variables $y_{j+1}, \ldots, y_{d-1}$ divide $T$ is at least  
\begin{eqnarray*}
e_{j+1}+ \cdots + e_{d-1} &=&  e_1+\cdots+e_d - (e_1+\cdots +e_j+e_d)\\
	&=&  (d-t+1)a - (e_1+\cdots + e_j + e_d).
\end{eqnarray*}
Since $e_1 = \max\{e_1, e_2, \ldots, e_{d-1}\}$ we have $e_1+e_2+\cdots+e_j\leq je_1$.  So
\begin{eqnarray*}
(d-t+1)a - (e_1+\cdots + e_j + e_d) &\geq& (d-t+1)a - (je_1 + e_d)\\
	&=& (d-t+1)a -je_1-e_d.
\end{eqnarray*}
And since $e_d$ is the number of times that the variable $y_d$ appears among the (square-free) monomials $m_1, \dots, m_a$ we have $a\geq e_d$.  So
\begin{eqnarray*}
(d-t+1)a -je_1-e_d &\geq&(d-t+1)a - je_1 -a
	= (d-t)a -je_1.
\end{eqnarray*}
Since $j\geq 1$, this number is larger than the number of times that the variables $y_{j+1}, \ldots, y_{d-1}$ divide $u_1, \ldots , u_s$.  Therefore, there must be some $y_k$ with $j+1\leq k\leq d-1$ which divides $U$.  Let $U = y_kU'$.  By assumption, there is some monomial among $u_1, \ldots, u_{e_1+1}$ which is not divisible by $y_k$.  Without loss of generality, say $y_k \nmid u_1$.  Then $u_1 = y_1y_{i_2}\cdots y_{i_{d-t+1}}m'$ for some monomial $m'$ with $y_{i_\ell}\neq y_k$ for all $2\leq \ell \leq d-t+1$.  Then
\begin{eqnarray*}
Ty_1 = u_1\dots u_s U
	&=&  (y_1y_{i_2}\cdots y_{i_{d-t+1}}m')  u_2\cdots  u_s(y_kU')\\
	&=&  (y_ky_{i_2}\cdots y_{i_{d-t+1}}m') u_2 \cdots  u_s(y_1U').
\end{eqnarray*}
Since $y_ky_{i_2}\cdots y_{i_{d-t+1}}m' \in \mathcal{G}(J)$ this 
implies that $T \in J^s$, which is a contradiction.  
So $w \not\in P$, and thus $\Gamma_P = K_{1,d}$, as desired.

{\it Case 2:}   Suppose that no generator among $u_1, \dots, u_s$ is of the 
form $xy_dm'$ (which implies $c=0$).  
Assume again that each of the variables $y_1, \dots, y_j$ with $1 \leq j < d$ 
divides each of the monomials $u_1, \dots, u_{e_1+1}$ and that the variables 
$y_{j+1}, \ldots, y_{d-1}$ do not.  Note that $y_d$ may or may not divide
every monomial in $u_1,\ldots,u_{e+1}$.
We will count the variables $y_{j+1}, \dots, y_d$. 
 We saw in the previous case that we arrive at a contradiction if we assume that the 
variable $y_d$ divides every minimal generator of type $(i)$ in the list 
$u_1, \dots,  u_s$.   Therefore we may assume that there is some monomial 
of type $(i)$ among $u_1, \ldots , u_{e_1+1}, u_{e_1+b+2}, \dots, u_s$ which
 is of type $(i)$ and which is not divisible by $y_d$.  

Now $(d-t+1-j)$ of the 
variables $y_{j+1}, \dots, y_d$ divide each of the monomials $u_1, \dots, u_{e_1+1}$. 
 In addition, at most $(d-t+1)$ of the variables $y_{j+1}, \dots, y_d$ divide each of the 
monomials $u_{e_1+b+2}, \dots, u_s$.  In total the number of times that the 
variables $y_{j+1}, \dots, y_d$ divide the monomials $u_1, \dots, u_s$ is at most
\[
(d-t+1-j)(e_1+1)+(d-t+1)(s-(b+e_1+1)) = (d-t+1)a -je_1 -j.\\
\]
On the other hand, since $T = T'y_1^{e_1}\cdots y_{d-1}^{e_{d-1}}y_d^{e_d}x^{b}$ (because $c=0$ in this case), 
the number of times the variables $y_{j+1}, \dots, y_d$ divide $Ty_1$ is at least
\begin{eqnarray*}
e_{j+1}+\cdots+ e_d &=& e_1+\cdots+e_d - (e_1+\cdots +e_j)\\
	&=&  (d-t+1)a -(e_1+\cdots+e_j)\\
	&\geq&  (d-t+1)a -je_1
\end{eqnarray*}  
because $e_1 = \max\{e_1, \dots, e_{d-1}\}$.

Since $j\geq1$, this number is strictly greater than the number of times 
that $y_{j+1}, \ldots, y_d$ divide the monomials $u_1, \ldots, u_s$.  Therefore 
there is some $y_k$ with $j+1\leq k\leq d$ which divides $U$.  If $k \neq d$, 
then we know that there is some monomial among $u_1, \ldots, u_{e_1+1}$ which is not 
divisible by $y_k$.  Without loss of generality we may assume that $y_k$ does 
not divide $u_1 = y_1y_{i_2}\cdots y_{i_{d-t+1}}m'$.  Then
\begin{eqnarray*}
Ty_1 = u_1\cdots u_s U
	&=&  (y_1y_{i_2}\cdots y_{i_{d-t+1}}m')  u_2\cdots  u_s(y_kU')\\
	&=&  (y_ky_{i_2}\cdots y_{i_{d-t+1}}m') u_2 \cdots  u_s(y_1U').
\end{eqnarray*}
Since $y_ky_{i_2}\cdots y_{i_{d-t+1}}m' \in \mathcal{G}(J)$, 
this implies that $T \in J^s$ which is a contradiction.   

Finally, assume that none of $y_{j+1}, \dots, y_{d-1}$ divide $U$.  Then $y_d$ 
must divide $U$. Let $U = y_dU'$.  If there is some monomial among 
$u_1, \dots, u_{e_1+1}$ which is not divisible by $y_d$ then we arrive at a
 contradiction as above. If $y_d$ divides each of $u_1, \dots, u_{e_1+1}$,
 then there is some monomial in the list $u_{e_1+b+2}, \dots, u_s$ which is
 not divisible by $y_d$.  Without loss of generality, assume $u_s$ is not 
divisible by $y_d$.  So $u_s = y_{k_1}\cdots y_{k_{d-t+1}}m$, where 
$y_{k_\ell} \neq y_d$ for all $1 \leq \ell \leq d-t+1$ and 
$u_1 = y_1 y_{i_2}\cdots y_{i_{d-t}}y_dm'$.   Since $y_d$ divides $u_1$ and
 does not divide $u_s$ there is at least one of the variables $y_{k_1}, \ldots, y_{k_{d-t+1}}$ 
which does not divide $u_1$.  Assume that $y_{k_1}$ does not divide $u_1$.  Then
\begin{eqnarray*}
Ty_1 &=& u_1\cdots u_sU\\
	&=&  (y_1 y_{i_2}\cdots y_{i_{d-t}}y_dm')u_2\cdots u_{s-1}(y_{k_1}y_{k_2}\cdots y_{k_{d-t+1}}m)(y_dU')\\
	&=&(y_{k_1} y_{i_2}\cdots y_{i_{d-t}}y_dm')u_2\cdots u_{s-1}(y_{k_2}\cdots y_{k_{d-t+1}}y_dm)(y_1U').
\end{eqnarray*}
Since $y_{k_1} y_{i_2}\cdots y_{i_{d-t}}y_dm'$ and $y_{k_2}\cdots y_{k_{d-t+1}}y_dm$ 
are also minimal generators of $J$, this implies that $T$ is an element of $J^s$ 
which is a contradiction.

Therefore the associated prime $P$ cannot be of the form $P = \langle y_1, \dots, y_d, x, w, \ldots \rangle$.  In other words, $\deg (y_d) = 1$, so $\Gamma_P = K_{1, d}$ is a star graph as desired.
\end{proof}

We can now prove Theorem \ref{maintheorem}.

\begin{proof}(of Theorem \ref{maintheorem})  The persistence property is immediate
from our description of the sets ${\rm Ass}(J_t(\Gamma)^s)$ in Theorem \ref{maintheoremtrees}.
When $t=1$,  
${\rm astab}(J_1(\Gamma))=1$ since $\Gamma$ is bipartite.  So the result follows from \cite{GRV}.  When $t \geq 2$, let $x$
be a vertex with $\deg x = \Delta(\Gamma)$, i.e., a vertex of maximal degree.
Let $P = \{x\} \cup N(x)$.  Then $\Gamma_P = K_{1,\Delta(\Gamma)}$.
If we abuse notation, and let $P$ also denote the ideal generated by
the variables corresponding to the vertices in $P$, then $P \in {\rm Ass}(J_t(\Gamma)^s)$
if and only if  $s(t-1) \geq \Delta(\Gamma)-1$.  So ${\rm astab}(J_t(\Gamma)) \geq 
\min\{s ~|~ s(t-1) \geq \Delta(\Gamma)-1\}$. 

Let $s_0 = \min\{s ~|~ s(t-1) \geq \Delta(\Gamma)-1\}$ and suppose that 
  ${\rm astab}(J_t(\Gamma)) > s_0$.  Because $J_t(\Gamma)$ has the persistence property,
that means that there is a $P \in {\rm Ass}(J_t(\Gamma)^s) \setminus {\rm Ass}(J_t(\Gamma)^{s_0})$ 
with $s > s_0$.  We can assume $s$ is the smallest such integer 
with this property.
By Theorem \ref{maintheoremtrees},
$\Gamma_P = K_{1,r}$, and by Theorem \ref{maintheoremstar}, we must have $s(t-1) \geq r-1$.
Since $P \not\in {\rm Ass}(J_t(\Gamma)^{s_0})$, we must have $s_0(t-1) \not\geq r-1$.  But this means that $r > \Delta(\Gamma)$,
which implies that $\Gamma$ has a vertex of degree greater than $\Delta(\Gamma)$,
a contradiction.
\end{proof}



\end{document}